\numberwithin{equation}{section}
\theoremstyle{plain}
\newtheorem{theorem}{Theorem}[section]
\newtheorem{lemma}[theorem]{Lemma}
\newtheorem{corollary}[theorem]{Corollary}
\newtheorem{proposition}[theorem]{Proposition}
 \theoremstyle{definition}
\newtheorem{definition}[theorem]{Definition}
\newtheorem{remark}[theorem]{Remark}
\newtheorem{?}[theorem]{Problem}
\newtheorem{example}[theorem]{Example}
\begin{document}


\title[Boundedness of Fourier multipliers]{ $L_p\rightarrow L_q$ boundedness of Fourier multipliers}

\author[Medet Nursultanov]{M. Nursultanov}
\address {Department of Mathematics and Statistics, University of Helsinki}
\email{medet.nursultanov@gmail.com}

 \subjclass[2010]{Primary: 42A45. Secondary 42A05, 42A16.}

 \keywords{Fourier multipliers, H\"ormander's theorem, Lizorkin's theorem, Fourier coefficients, Fourier transform}

\maketitle

\begin{abstract}
	We investigate the $L_p \mapsto L_q$ boundedness of the Fourier multipliers. We obtain sufficient conditions, namely, we derive Hormander and Lizorkin type theorems. We also obtain the necessary conditions. For $M$-generalized monotone functions, we obtain a criteria for boundedness of the corresponding Fourier multipliers.
\end{abstract}
\section{Introduction}

The study of Fourier multipliers has been attracting attention of researchers for more than a century. This is related to numerous applications in mathematical analysis, in particular, in partial differential equations. One of the important questions in this field is to understand the $L_p\rightarrow L_q$ boundedness of a Fourier multipliers. 

In case $p=q$, one of the earliest important works was obtained by Marcinkiewicz \cite{Marcinkiewicz} in 1939, see also \cite{KaczmarzMarcinkiewicz}. He obtained a sufficient condition for $L_p\rightarrow L_p$ boundedness of Fourier series multipliers. An analogue of his result for Fourier transform multipliers also holds, see \cite{Maligranda}. Another important result was obtained by Mikhlin \cite{Mikhlin} in 1956, which was improved by Stain \cite{Stein} and H\"ormander \cite{Hormander}. There were further developments in this topic, we mention works \cite{CalderonTorchinsky,Grafakos2021,GrafakoSlavikova2019,GrafakoSlavikova2019Asharp,Hytonen2004,NursultanovE1998Mathnotes} and references therein. We also refer to the work \cite{Grafakos2021Someremarks} for a short historical overview of the Mikhlin-H\"ormander and Marcinkiewicz theorems.

For the case $p\leq q$, there another two classical results available: H\"ormander's multiplier theorem \cite{Hormander} and Lizorkin's multiplier theorem \cite{Lizorkin}. There is a fundamental difference between these two results: H\"ormander's theorem does not require any regularity of the symbol and applies to $p$ and $q$ separated by 2, while Lizorkin's theorem requires weaker conditions on $p$, $q$ but imposes certain regularity conditions on the symbol. For this case, we also mention works \cite{AkylzhanovRuzhansky,AkylzhanovRuzhanskyNursultanov,Edwards,NursultanovTleukhanova1999,NursultanovTleukhanova2000,PerssonSarybekovaTleukhanova,RozendaalVeraar,SarybekovaTararykovaTleukhanova} and references therein.

In this work we are interested on $L_p(I)\rightarrow L_q(I)$ boundedness of a Fourier multipliers in cases $I=\mathbb{R}$ and $I=(0,1)$. The corresponding higher dimensional cases will be considered in future work.

\subsection{H\"ormander type theorem.} 
We recall that in \cite[Theorem 1.11]{Hormander}, H\"ormander showed that, for $1 < p \leq 2 \leq q<\infty$, a symbol $\lambda$ and the corresponding Fourier transform multiplier $T_\lambda$ satisfy
\begin{equation}\label{H_t_cond}
	\|T_{\lambda}\|_{L_p(\mathbb{R})\mapsto L_q(\mathbb{R})} \lesssim \|\lambda\|_{L_{r,\infty}(\mathbb{R})}, \quad 1/r = 1/p -1/q.
\end{equation}
This result was also obtained for the case of interval. It was shown in \cite[p. 303]{Edwards} that under the same conditions on $p$, $q$, and $r$, for a sequence of complex numbers $\lambda = \{\lambda_k\}_{k\in \mathbb{Z}}$ and the corresponding Fourier series multiplier $T_\lambda$, the estimate holds
\begin{equation}\label{H_s_cond}
	\|T_{\lambda}\|_{L_p(0,1)\mapsto L_q(0,1) } \lesssim \|\lambda\|_{l_{r,\infty}(\mathbb{Z})}.
\end{equation}

There were some other works in this direction. In \cite{NursultanovTleukhanova1999,NursultanovTleukhanova2000}, authors improved the sufficient condition \eqref{H_s_cond}. Moreover, they obtain a necessary condition. We also mention that H\"ormander type theorem was obtained in \cite{AkylzhanovRuzhansky,AkylzhanovRuzhanskyNursultanov}, where author investigate he $L_p\rightarrow L_q$ boundedness of Fourier multipliers in the context of compact Lie groups.

This work partially devoted to further development of H\"ormander's result.  We weaken the sufficient conditions \eqref{H_t_cond} and \eqref{H_s_cond}. Additionally, we obtain necessary conditions for $L_p\rightarrow L_q$ boundedness of Fourier multipliers. In the interest of brevity, we present a slightly simplified version of our results:
\begin{theorem}\label{HT}
	Let $1< p \leq 2 \leq q <\infty$ and $1/r = 1/p - 1/q$. Let $r'$ be the conjugate exponent of $r$, then, the following statements are true
	\begin{enumerate}[label=(\roman*)]
		\item For a measurable function $\lambda$, it follows
		\begin{equation*}
			\sup_{k\in \mathbb{Z}} \sup_{e\in M_k} \frac{1}{|e|^{1/r'}} \left| \int_{e} \lambda(\xi) d\xi \right| \lesssim \|T_\lambda\|_{L_p(\mathbb{R})\rightarrow L_q(\mathbb{R})} \lesssim \sup_{k\in \mathbb{Z}} \sup_{e \subset \Delta_k} \frac{1}{|e|^{1/r'}} \left| \int_{e} \lambda(\xi) d\xi \right|,
		\end{equation*}
		where $M_k$ is the set of intervals containing in 
		$$\Delta_{k}: = (-2^{k+1}, -2^k] \cup [2^k,2^{k+1}).$$
		\item For a sequence $\lambda = \{\lambda_k\}_{k\in \mathbb{Z}}$, it follows
		\begin{equation*}
			\sup_{k\in \mathbb{N}_0} \sup_{e\in W_k} \frac{1}{|e|^{1/r'}} \left|  \sum_{m\in e} \lambda_m \right| \lesssim \|T_\lambda\|_{L_p(0,1)\rightarrow L_q(0,1)} \lesssim \sup_{k\in \mathbb{N}_0} \sup_{e \subset \delta_k} \frac{1}{|e|^{1/r'}} \left|  \sum_{m\in e} \lambda_m \right|,
		\end{equation*}
		where $\mathbb{N}_0 = \mathbb{N}\cup \{0\}$ and $W_k$ is the set of all discrete intervals (finite arithmetic progressions with a common difference of 1) containing in 
		\begin{equation*}
			\delta_k: = \{-2^{k+1}+1, \cdots, -2^k\}\cup \{2^k,\cdots,2^{k+1} - 1\},
		\end{equation*}
		If $k\in \mathbb{N}$, and $\delta_0:=\{-1,0,1\}$, if $k=0$.
	\end{enumerate}
\end{theorem}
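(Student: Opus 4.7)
The plan is to treat the two inequalities in (i) and (ii) by a common scheme. The lower bound comes from a test-function computation that actually works for any interval $e$ (so the restriction $e\subset\Delta_k$ is inherited by taking the supremum), while the upper bound follows from a dyadic decomposition of $\lambda$, a scaling reduction, a localized estimate on the base block, and a Littlewood--Paley assembly. I sketch the argument for part (i) and note the modifications for (ii) at the end.

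For the lower bound in (i), fix an interval $e$ of length $L=|e|$ with midpoint $x_0$ and set $\widehat f(\xi)=L^{-1}\chi_e(\xi)e^{-ix_0\xi}$ and $\widehat g(\xi)=\chi_e(\xi)e^{-ix_0\xi}$. A Dirichlet-kernel computation yields $\|f\|_{L_p(\R)}\asymp L^{-1/p}$ and $\|g\|_{L_{q'}(\R)}\asymp L^{1/q}$, while Parseval gives
\[
|\langle T_\lambda f,g\rangle|=cL^{-1}\left|\int_e\lambda\right|.
\]
Since $|\langle T_\lambda f,g\rangle|\le\|T_\lambda\|_{L_p\to L_q}\|f\|_{L_p}\|g\|_{L_{q'}}$, the estimate $|e|^{-1/r'}|\int_e\lambda|\lesssim\|T_\lambda\|_{L_p\to L_q}$ follows, and taking suprema over $e\subset\Delta_k$ and $k$ yields the claim.

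For the upper bound in (i), set $C:=\sup_k\sup_{e\subset\Delta_k}|e|^{-1/r'}\left|\int_e\lambda\right|$ and decompose $\lambda=\sum_k\lambda_k$ with $\lambda_k=\lambda\chi_{\Delta_k}$. The dilation identity $\|T_{\mu(\cdot/a)}\|_{L_p\to L_q}=a^{1/r}\|T_\mu\|_{L_p\to L_q}$, together with the matching scaling of the hypothesis under $\xi\mapsto\xi/2^k$, reduces the uniform bound $\|T_{\lambda_k}\|_{L_p\to L_q}\lesssim C$ to the \emph{local estimate}
\[
\|T_\mu\|_{L_p\to L_q}\lesssim\sup_{e\subset\Delta_0}\frac{1}{|e|^{1/r'}}\left|\int_e\mu\right|\qquad(\operatorname{supp}\mu\subset\Delta_0).
\]
To assemble the dyadic blocks, pick a smooth Littlewood--Paley family $\{\widetilde S_k\}$ with $\widetilde S_k\equiv 1$ on $\Delta_k$ and supported on a slight enlargement, so that $T_{\lambda_k}f=T_{\lambda_k}\widetilde S_k f$ has frequency support near $\Delta_k$. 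The LP inequality on $L_q$ (for $q\ge 2$) and Minkowski's inequality on $L_{q/2}$ give
\[
\|T_\lambda f\|_{L_q}^2\lesssim\sum_k\|T_{\lambda_k}f\|_{L_q}^2\le C^2\sum_k\|\widetilde S_k f\|_{L_p}^2,
\]
and the final inequality $\sum_k\|\widetilde S_k f\|_{L_p}^2\lesssim\|f\|_{L_p}^2$ for $1<p\le 2$ follows from cotype $2$ of $L_p$ combined with Khintchine's inequality and the randomized Littlewood--Paley bound.

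The local estimate on $\Delta_0$ is the heart of the argument and the main obstacle: the hypothesis is genuinely weaker than any direct norm bound on $\mu$ itself, so the proof must exploit that the antiderivative $M(\xi)=\int^\xi\mu$ is H\"older continuous of order $1/r'$ on each component of $\Delta_0$. I would use Abel summation to express $T_\mu$ as a superposition of indicator multipliers $T_{\chi_I}$, each of which satisfies $\|T_{\chi_I}\|_{L_p\to L_q}\lesssim|I|^{1/r}$ (Hausdorff--Young combined with H\"older), with coefficients given by differences of $M$. Part (ii) then follows the same outline: the Littlewood--Paley theorem for Fourier series replaces the continuous LP inequality, the lower-bound trial functions are Dirichlet-type polynomials whose Fourier coefficients are supported on the discrete interval $e$, and the local estimate on each $\delta_k$ reduces to a finite-sum Abel argument parallel to the continuous case.
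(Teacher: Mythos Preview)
Your lower-bound argument is correct and essentially identical to the paper's: one tests against functions whose Fourier transform is $\chi_e$, computes $\|f\|_{L_p}\asymp |e|^{1/p'}$ via the sinc-kernel (the paper invokes Sagher's theorem for this), and reads off $|e|^{-1/r'}\bigl|\int_e\lambda\bigr|\lesssim\|T_\lambda\|$ for every interval $e$, then restricts to $e\in M_k$.

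The upper bound, however, rests on a misreading. In the statement, the supremum $\sup_{e\subset\Delta_k}$ on the right-hand side runs over \emph{all} measurable subsets $e$ of $\Delta_k$, not only over intervals (contrast with the left-hand side, where $e\in M_k$ is explicitly restricted to intervals). With $e$ ranging over all measurable sets one has the identity
\[
\sup_{e\subset\Delta_k}\frac{1}{|e|^{1/r'}}\left|\int_e\lambda\right|=\|\lambda\|_{L_{r,\infty}(\Delta_k)},
\]
so the hypothesis \emph{is} a direct Lorentz-norm bound on each dyadic block, and the ``local estimate'' you flag as the main obstacle is immediate: for $\mu$ supported in $\Delta_0$, Hausdorff--Young (in Lorentz form) plus H\"older gives $\|T_\mu\|_{L_p\to L_q}\lesssim\|\mu\|_{L_{r,\infty}}$. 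There is no need to interpret the hypothesis as H\"older continuity of the antiderivative, and the Abel-summation scheme you sketch (which is really a Lizorkin-type device) is neither needed nor clearly sufficient for the stronger interval-only hypothesis you had in mind.

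Once this is corrected, your scheme and the paper's coincide in outline: Littlewood--Paley decomposition, Minkowski (using $q\ge 2$), the block estimate via Hardy--Littlewood and H\"older, and then the reverse passage using $p\le 2$. The paper does this in one pass without a scaling reduction and uses sharp dyadic cutoffs with the Minkowski inequality in place of your smooth $\widetilde S_k$ and cotype/Khintchine argument; both routes are valid, the paper's being somewhat shorter.
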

A comparison of our results with those of previous studies is presented in Section \ref{examandcor}. It is shown that the sufficient conditions in Theorem \ref{HT} are strictly weaker than \eqref{H_t_cond} and \eqref{H_s_cond}.

For a large class of functions, we show that the sufficient and necessary conditions we obtained here are equivalent, allowing us to formulate a criteria for the $L_p\rightarrow L_q$ boundedness of Fourier multipliers. Namely, we say that 
a complex valued function $\lambda$ on $\mathbb{R}$ is a $M$-generalized monotone function if 
\begin{equation*}
	\lambda^*(t) \leq C \sup_{e\in M, |e|\geq t} \frac{1}{|e|} \left| \int_e \lambda(x) dx \right|,
\end{equation*}
where $M$ is a set of some measurable subsets in $\mathbb{R}$ with positive measure, $\lambda^*$ is a non-increasing rearrangement, and $C$ is some positive constant depending on $\lambda$. We show that if $M$ is the set of all intervals and $\lambda$ is a $M$-generalized monotone function, then $\lambda$ represents $L_p\rightarrow L_q$ Fourier transform multiplier if and only if 
\begin{equation*}
	\sup_{e \in M} \frac{1}{|e|^{1/r'}} \left| \int_{e} \lambda(\xi) d\xi \right| < \infty.
\end{equation*}
Analogically, we define $M$-generalized monotone sequences and obtain the same criteria for $L_p\rightarrow L_q$ boundedness of Fourier series multipliers. We mention that another generalizations of monotone functions and sequences were studied in \cite{Tikhonov2007,LiflyandTikhonov2008,LiflyandTikhonov2009,LiflyandTikhonov2011,DyachenkoMukanovTikhonov}.

\subsection{Lizorkin type theorem.}
Next, we recall another classical result. Under assumption $1<p<q<\infty$, Lizorkin \cite{Lizorkin} showed that for a continuously differentiable function $\lambda$ on $\mathbb{R}$, the corresponding Fourier transform multiplier $T_\lambda$ satisfies
\begin{equation}\label{L_t_cond}
	\|T_\lambda\|_{L_p(\mathbb{R})\mapsto L_q(\mathbb{R})} \lesssim \sup_{\xi\in \mathbb{R}} \left(  	|\xi|^{\frac{1}{r}}|\lambda(\xi)| +  |\xi|^{\frac{1}{r} + 1} |\lambda'(\xi)| \right),    \quad 1/r = 1/p -1/q.
\end{equation}
A similar result holds for intervals as well: For a sequence $\lambda=\{\lambda_k\}_{k\in \mathbb{Z}}$, the corresponding Fourier series multiplier $T_\lambda$ satisfies
\begin{equation}\label{L_s_cond}
		\|T_\lambda\|_{L_p(0,1)\mapsto L_q(0,1)} \lesssim \sup_{k \in \mathbb{Z}} \left(  |k|^{\frac{1}{r}} |\lambda_k|  +   |k|^{\frac{1}{r} + 1} |\lambda_k - \lambda_{k+1}|  \right).
\end{equation}
These results were generalized in \cite{SarybekovaTararykovaTleukhanova} and \cite{PerssonSarybekovaTleukhanova}. Authors derive strictly weaker sufficient conditions.

In this work we make further improvements of these results. We prove the following Lizorkin type theorem: 
\begin{theorem}\label{LT}
	Let $1 <p < q<\infty$, $1/r = 1/p - 1/q$, and $\Delta_{k}$, $\delta_{k}$ be the sets defined in Theorem \ref{HT}. Then the following statements are true
	\begin{enumerate}[label=(\roman*)]
		\item Let $\lambda$ be a real-valued function on $\mathbb{R}$ which is absolutely continuous on $(-\infty,0]$ and $[0,\infty)$ such that $\lambda(\xi)\rightarrow 0$ as $|\xi|\rightarrow \infty$. Then the corresponding Fourier transform multiplier satisfies
		\begin{equation*}
			\|T_\lambda\|_{L_p(\mathbb{R})\rightarrow L_q(\mathbb{R})} \lesssim \sup_{k\in \mathbb{Z}} 2^{\frac{k}{r}} \int_{\Delta_k} |\lambda'(\xi)| d\xi.
		\end{equation*}
		\item Let $\lambda=\{\lambda_k\}_{k\in \mathbb{Z}}$ be a sequence of real numbers such that $\lambda_k \rightarrow 0$ as $k\rightarrow \infty$. Then the corresponding Fourier series multiplier satisfies
	\begin{equation*}
		\|T_\lambda\|_{L_p(0,1)\rightarrow L_q(0,1)} \lesssim\sup_{k\in \mathbb{N}_0} 2^{\frac{k}{r}}\sum_{m=2^k}^{2^{k+1} - 1} \left(|\lambda_{ -m} - \lambda_{ -m + 1}| + |\lambda_{ m} - \lambda_{ m - 1}|\right) .
	\end{equation*}
	\end{enumerate}
\end{theorem}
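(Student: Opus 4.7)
The plan is to deduce both parts from the Hörmander-type upper bounds already supplied by Theorem \ref{HT}: in each case I would reduce the Lizorkin-type quantity on the right-hand side to the averaged quantity
\begin{equation*}
\sup_{k\in\mathbb{Z}} \sup_{e \subset \Delta_k} \frac{1}{|e|^{1/r'}} \left| \int_e \lambda(\xi)\, d\xi \right|
\end{equation*}
(or its discrete analogue) and then apply Theorem \ref{HT}. The main tool is the fundamental theorem of calculus (respectively Abel summation), combined with the observation that $1/r = 1/p - 1/q < 1/p \leq 1$, so the geometric tail $\sum_{j\geq k} 2^{-j/r}$ converges and is bounded by a constant multiple of $2^{-k/r}$.

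For part (i), denote by $A$ the Lizorkin supremum. Absolute continuity on $[0,\infty)$ together with $\lambda(\xi)\to 0$ at infinity gives $\lambda(a) = -\int_a^\infty \lambda'(t)\, dt$ for $a > 0$; splitting $[a,\infty)$ dyadically over the blocks $\Delta_j$, $j\geq k$, and invoking the hypothesis block by block yields
\begin{equation*}
|\lambda(a)| \lesssim A \sum_{j\geq k} 2^{-j/r} \lesssim A\, 2^{-k/r}
\end{equation*}
for $a \in [2^k, 2^{k+1})$, with the symmetric estimate on the negative half. For an interval $e$ contained in one half of $\Delta_k$, I fix a reference point $a \in e$ and decompose $\int_e \lambda = \lambda(a)|e| + \int_e (\lambda(\xi)-\lambda(a))\,d\xi$; the correction term is bounded by $|e|\int_{\Delta_k}|\lambda'(t)|\,dt \leq |e|\, A\, 2^{-k/r}$, so $\left|\int_e \lambda\right| \lesssim A\, 2^{-k/r}|e|$. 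Since $|e|\leq |\Delta_k| \lesssim 2^k$, multiplying by $|e|^{-1/r'} = |e|^{1/r - 1}$ produces $|e|^{-1/r'}\left|\int_e \lambda\right| \lesssim A$, and Theorem \ref{HT}(i) completes the argument.

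Part (ii) is the discrete analogue via Abel summation. Writing $B$ for the right-hand side, the hypothesis $\lambda_m \to 0$ gives $\lambda_a = \sum_{n>a}(\lambda_{n-1}-\lambda_n)$ for positive $a$; the same dyadic splitting produces $|\lambda_a| \lesssim B\, 2^{-k/r}$ for $a \in \delta_k$, together with the symmetric bound for negative indices. For a discrete interval $e \subset \delta_k$ and $a \in e$, writing $\sum_{m\in e}\lambda_m = \lambda_a |e| + \sum_{m \in e}(\lambda_m - \lambda_a)$ bounds the correction by $|e|$ times the total variation of $\lambda$ over $\delta_k$, hence again $\lesssim B\, 2^{-k/r}|e|$. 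The conclusion proceeds exactly as in part (i), and Theorem \ref{HT}(ii) finishes the proof.

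The main obstacle is bookkeeping rather than any conceptual difficulty. One has to treat the two disjoint halves of $\Delta_k$ (and $\delta_k$) separately because $\lambda$ need not be continuous across the origin, and the small-$k$ degeneracies — in particular $k=0$ in the discrete statement, where $\delta_0 = \{-1,0,1\}$ is not a pair of dyadic blocks — require a brief separate check, though there $|e|\leq 3$ makes the estimate essentially immediate.
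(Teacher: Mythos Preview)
Your argument contains a genuine gap: Theorem \ref{HT} is stated only under the restriction $1<p\leq 2\leq q<\infty$, whereas Theorem \ref{LT} is claimed for the full range $1<p<q<\infty$. Thus your reduction establishes the Lizorkin-type bound only when $p$ and $q$ are separated by $2$; cases such as $1<p<q<2$ or $2<p<q<\infty$ are left uncovered. This is not a minor omission: the whole point of Lizorkin's theorem, as emphasised in the Introduction, is precisely that it relaxes the $p\leq 2\leq q$ constraint of H\"ormander's theorem at the price of some regularity on the symbol.

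The paper's proof proceeds quite differently and does not pass through Theorem \ref{HT}. One first proves a weak-type bound $L_{p,1}\to L_{q,\infty}$ by dualising to a bilinear form $\int\lambda\,\mathcal{F}f\,\mathcal{F}g$, integrating by parts (respectively Abel-summing), and controlling the resulting expression by $\|\mathcal{F}f\,\mathcal{F}g\|_{N_{\tau,1}(M)}$ in the NET-space norm, with $1/\tau=1/p'+1/q$. Theorem \ref{nursultanovs inequality} (valid for $1<p<\infty$, cf.\ the Remark following it) then bounds this by $\|f*g\|_{L_{\tau',1}}$, and O'Neil's inequality yields $\|f\|_{L_{p,1}}\|g\|_{L_{q',\infty}}$. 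Finally, since this weak-type estimate depends only on $1/p-1/q$, one can shift $p$ to two nearby values $p_0<p<p_1$ while keeping $1/p_j-1/q_j$ fixed, and Marcinkiewicz--Calder\'on interpolation recovers the strong $L_p\to L_q$ bound for all $1<p<q<\infty$.

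In the restricted range $1<p\leq 2\leq q<\infty$ your argument is essentially correct (indeed, once you have the pointwise bound $|\lambda(a)|\lesssim A\,2^{-k/r}$ on $\Delta_k$, the averaged quantity in Theorem \ref{HT} is controlled for \emph{any} measurable $e\subset\Delta_k$, not just intervals, which slightly simplifies what you wrote). But to obtain the theorem as stated you would need to supply a separate argument for the remaining ranges of $p,q$.
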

We note that the sufficient conditions in Theorem \ref{LT} are strictly weaker than \eqref{L_t_cond} and \eqref{L_s_cond}, see Examples \ref{examL1} and \ref{examL2}. We show that Theorem \ref{LT} is at least complementary to results in \cite{SarybekovaTararykovaTleukhanova} and \cite{PerssonSarybekovaTleukhanova}.

The paper has simple structure. In Section \ref{prel}, we introduce notation and recall some definitions and know results. In Section \ref{sec_suf_cond}, we prove sufficient conditions, namely, we obtain H\"ormander type and Lizorkin type theorems. In Section \ref{sec_nec_cond}, we obtain necessary conditions. In the next section, we introduce notion of $M$-generalized monotone functions and sequences. We obtain criteria for boundedness of Fourier multipliers corresponding to $M$-generalized monotone functions and sequences. Finally, in Section \ref{examandcor}, we derive corollaries, give examples and compare our theorems with some previous results.

\section{Preliminaries}\label{prel}
In this section we introduce notations and recall some definitions. In our analysis, we often write $x\lesssim y$ or $y\gtrsim x$ to mean that $x\leq Cy$, where $C >0$ is some constant. The dependencies of $C$ will either be explicitly specified or otherwise, clear from context. By $x\approx y$ we mean that $x\lesssim y$ and $x\gtrsim y$. For two sequences $a=\{a_k\}_{k\in \mathbb{Z}}$ and $b=\{b_k\}_{k\in \mathbb{Z}}$, we write $ab:=\{a_kb_k\}_{k\in \mathbb{Z}}$. We also use notation $\mathbb{N}_0: = \mathbb{N}\cup \{0\}$, where $\mathbb{N}:= \{1,2,\cdots\}$.

Let $(\Omega,\Sigma, \mu)$ be a measure space. We denote by $L_p(\Omega)$, $1\leq p\leq \infty$, the space of measurable functions in $\Omega$ with integrable $p\textsuperscript{th}$ power, and write
\begin{equation*}
	\|f\|_{L_p(\Omega)}:= \left(\int_{\Omega} |f(x)|^pd_\mu x\right)^{1/p}, \qquad f\in L_p(\Omega).
\end{equation*}
When $p=\infty$ this is understood as the essential supremum of $|f|$. When $1\leq p\leq \infty$ we use notation $p'$ for the conjugate exponent defined by $1/p + 1/p' = 1$.

For a measurable function $f$, by $d_f$ and $f^*$ we denote its distribution function and non-increasing rearrangement:
\begin{equation*}
d_f(\sigma):=|\{x\in \Omega: |f(x)|\geq \sigma\}|, \qquad f^*(t):= \inf\{\sigma>0: d_f(\sigma)<t\}.
\end{equation*}

\begin{definition}
Let $0<p<\infty$ and $0<q\leq \infty$. The Lorentz space, $L_{p,q}(\Omega)$, is defined as the space of finitely measurable functions $f$ such that $\|f\|_{L_{p,q}(\Omega)} \leq \infty$, where
\begin{equation*}
	\|f\|_{L_{p,q}(\Omega)}:=
	\begin{cases}
	\left(\int_{0}^{\infty} \left(t^{\frac{1}{p}}f^*(t)\right)^q\frac{dt}{t}\right)^{\frac{1}{q}}, & \text{for } q<\infty,\\
	\sup_{t\geq 0}t^{\frac{1}{p}}f^*(t) <\infty, & \text{for } q=\infty.
	\end{cases}
\end{equation*}
\end{definition}
For $\Omega=\mathbb{Z}$ with $\Sigma = 2^{\mathbb{N}}$ and $\mu= \#$ being the power set of $\mathbb{Z}$ and counting measure, respectively, the non-increasing rearrangement, $a^*=\{a_k^*\}_{k\in \mathbb{N}}$, of $a=\{a_k\}_{k\in \mathbb{Z}}$ can be be obtained by permuting $\{|a_k|\}_{k\in \mathbb{Z}}$ in the non-increasing order. For this case, the definition becomes as follows:  
\begin{definition}
Let $0<p<\infty$ and $0<q\leq \infty$. The Lorentz sequence space, $l_{p,q}(\mathbb{Z})$, is defined as a space of sequences $a = \{a_k\}_{k\in \mathbb{Z}}$ such that $\|a\|_{l_{p,q}(\mathbb{Z})}<\infty$, where 
\begin{equation*}
\|a\|_{l_{p,q}(\mathbb{Z})}:=
\begin{cases}
\left(\sum_{k\in \mathbb{N}} \left(k^{\frac{1}{p}}a^*_k\right)^q\frac{1}{k}\right)^{\frac{1}{q}}, & \text{for } q<\infty,\\
\sup_{k\in \mathbb{N}}k^{\frac{1}{p}}a_k^* <\infty, & \text{for } q=\infty.
\end{cases}
\end{equation*}
Furthermore, for a subset $B\subset \mathbb{Z}$, we define
\begin{equation*}
	\|a\|_{l_{p,q}(B)} := \|\tilde{a}\|_{l_{p,q}(\mathbb{Z})},
\end{equation*}
where $\tilde{a}$ is a sequence such that 
\begin{equation*}
	\tilde{a}_k = 
	\begin{cases}
		a_k & \text{if } k\in B,\\
		0 & \text{if } k \in \mathbb{Z}\setminus B.
	\end{cases}
\end{equation*}

\end{definition}
\begin{remark}
	When $q=\infty$, the definitions above also make sense for $p=\infty$, so that the spaces $L_{\infty,\infty}$ and $l_{\infty,\infty}$ are well defined and they coincide with $L_{\infty}$ and $l_{\infty}$, respectively.
\end{remark}

Let $S(\mathbb{R})$ be the space of Schwartz functions on $\mathbb{R}$. For a function $\lambda$, the Fourier transform multiplier, $T_\lambda$, is given by the multiplication on the Fourier transform side, that is
\begin{equation*}
	\mathcal{F}T_\lambda f(\xi) = \lambda(\xi)\mathcal{F}f(\xi) \qquad \xi \in \mathbb{R}, \quad f\in S(\mathbb{R}),
\end{equation*}
where $\mathcal{F}$ is the Fourier transform:
\begin{equation*}
	\mathcal{F}f(\xi) := \frac{1}{\sqrt{2\pi}} \int_{\mathbb{R}} f(x) e^{-ix\xi}dx.
\end{equation*}

We also recall the definition of the Fourier series multipliers. We say that the sequence of complex numbers $\lambda = \{\lambda_k\}_{k\in \mathbb{Z}}$ represents a Fourier series multiplier, $T_{\lambda}$, from $L_p(0,1)$ to $L_q(0,1)$ if for any $f\in L_p(0,1)$ with 
$$f\sim\sum_{k\in\mathbb{Z}}a_ke^{2\pi i k x},$$
there exists $f_\lambda \in L_q(0,1)$ with 
$$f_\lambda\sim\sum_{k\in\mathbb{Z}}\lambda_k a_ke^{2\pi i k x}$$
and the operator $T_{\lambda}: f \mapsto f_\lambda$ is bounded from $L_p(0,1)$ to $L_q(0,1)$.

By $M_{p}^q$ we denote the normed space of Fourier transform multipliers with the norm given by 
\begin{equation*}
	\|\lambda\|_{M_p^q}: = \|T_\lambda\|_{L_p\mapsto L_q}.
\end{equation*}
Similarly, we denote the normed Fourier series multipliers by $m_p^q$. 
\subsection{E.Nursultanov's NET space}
Here we recall the NET space which was introduced by E.Nursultanov in \cite{NursultanovE1998DAN, NursultanovE1998}. 
\begin{definition}\label{NET space}
	Let $0<p<\infty$ and $0<q\leq \infty$. Let $(\Omega,\Sigma,\mu)$ be a measure space and $M$ be a family of some measurable sets in $\Omega$ with finite positive measures. Then, E.Nursultanov's space, $N_{p,q}(M) = N_{p,q}(\Omega,M)$, is defined as the space of integrable on each $e\in M$ functions $f$ such that $\|f\|_{N_{p,q}(M)}<\infty$, where
	\begin{equation*}
		\|f\|_{N_{p,q}(M)}:=
		\begin{cases}
			\left(\int_{0}^{\infty}\left(t^{\frac{1}{p}}\bar{f}(t,M)\right)^q \frac{dt}{t}\right)^{\frac{1}{q}}, & \text{for } q<\infty,\\
			\sup_{t> 0} t^{\frac{1}{p}}\bar{f}(t,M), & \text{for } q=\infty
		\end{cases}
	\end{equation*}
and\footnote{For $e\in\Sigma$ we write $|e|:= \mu(e)$. From the context, it will be clear if it is the measure or absolute value.}
\begin{equation*}
	\bar{f}(t,M) := \sup_{|e|\geq t, e\in M} \frac{1}{|e|} \left| \int_{e} f(x) d_\mu x \right|.
\end{equation*}
\end{definition}

For the sake of convenience, we repeat this definition for the case $(\Omega,\Sigma,\mu) = (\mathbb{Z}, 2^{\mathbb{N}}, \#)$.
\begin{definition}\label{Net seq space}
	Let $0<p<\infty$ and $0<q\leq \infty$. Let $W$ be a set of some finite non-empty subsets of $\mathbb{Z}$, then E.Nursultanov's sequence space, $n_{p,q}(W) = n_{p,q}(\mathbb{Z},W)$, is defined as a space of complex sequences $a = \{a_k\}_{k\in \mathbb{Z}}$ such that $\|a\|_{n_{p,q}(W)}<\infty$, where 
	\begin{equation*}
		\|a\|_{n_{p,q}(W)}:=
		\begin{cases}
			\left(\sum_{k\in \mathbb{N}}\left(k^{\frac{1}{p}}\bar{a}_k(W)\right)^q \frac{1}{k}\right)^{\frac{1}{q}}, & \text{for } q<\infty,\\
			\sup_{k\in \mathbb{N}} k^{\frac{1}{p}}\bar{a}_k(W), & \text{for } q=\infty,
		\end{cases}
	\end{equation*}
	and
	\begin{equation*}
		\bar{a}_k(W) := \sup_{|e|\geq k, e \in W} \frac{1}{|e|} \left|\sum_{j\in e}a_j\right|.
	\end{equation*}
\end{definition}

\begin{remark}
		Let us note that if $M$ is the set of all measurable subsets of $\Omega$ with finite positive measures, then $N_{p,q}(\Omega, M) = L_{p,q}(\Omega)$. Similarly, if $W$ be set of all finite non-empty subsets of $\mathbb{Z}$, then $n_{p,q}(\mathbb{Z},W) = l_{p,q}(\mathbb{Z})$.
\end{remark}
	Like Lorentz spaces, E.Nursultanov's spaces are nested increasingly with respect to the second parameter; see Remark 1 in \cite{NursultanovE2000} and Proposition 2 in \cite{NursultanovE1998}. More precisely:
	\begin{proposition}\label{inclusion}
		Let $0<p<\infty$ and $0<q_1\leq q_2\leq \infty$, then the following statements are true:
		\begin{enumerate}[label=(\roman*)]
			\item  Let $M$ be a family of some measurable subsets of $\Omega$ with finite positive measures.  Then $N_{p,q_1}(M) \hookrightarrow N_{p,q_2}(M)$, that is 
			\begin{equation*}
				\| f\|_{N_{p,q_2}(M)} \lesssim \| f\|_{N_{p,q_1}(M)} ,  \qquad   \text{for } f \in N_{p,q_1}(M).
			\end{equation*}
			\item Let $W$ be a family of some finite non-empty sets in $\mathbb{Z}$. Then $n_{p,q_1}(W) \hookrightarrow n_{p,q_2}(W)$, that is 
			\begin{equation*}
				\| a\|_{n_{p,q_2}(W)} \lesssim \| a\|_{n_{p,q_1}(W)}, \qquad \text{for } a \in n_{p,q_1}(W).
			\end{equation*}
		\end{enumerate}
	\end{proposition}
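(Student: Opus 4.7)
The proof plan rests on a single monotonicity observation and then follows the classical Lorentz-space embedding argument almost verbatim. Specifically, the function $t \mapsto \bar{f}(t,M)$ is non-increasing on $(0,\infty)$: as $t$ grows, the admissible family $\{e \in M : |e|\geq t\}$ shrinks, so the supremum can only decrease. The same monotonicity holds for $k \mapsto \bar{a}_k(W)$ on $\mathbb{N}$. With this in hand, the arguments used for Lorentz spaces carry over with $f^*(t)$ replaced by $\bar{f}(t,M)$ and $a_k^*$ replaced by $\bar{a}_k(W)$.

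First I would treat the endpoint case $q_2=\infty$ of statement (i). Fix $t>0$. By monotonicity, $\bar{f}(s,M)\geq \bar{f}(t,M)$ for all $s\in(0,t]$, hence
\begin{equation*}
\|f\|_{N_{p,q_1}(M)}^{q_1} \;\geq\; \int_0^t \bigl(s^{1/p}\bar{f}(s,M)\bigr)^{q_1}\frac{ds}{s}
\;\geq\; \bar{f}(t,M)^{q_1}\int_0^t s^{q_1/p-1}\,ds
\;=\; \frac{p}{q_1}\bigl(t^{1/p}\bar{f}(t,M)\bigr)^{q_1}.
\end{equation*}
Taking the supremum over $t>0$ gives $\|f\|_{N_{p,\infty}(M)}\leq (q_1/p)^{1/q_1}\|f\|_{N_{p,q_1}(M)}$, which is the desired embedding in this case.

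For $q_1<q_2<\infty$ I would split off one factor of $t^{1/p}\bar{f}(t,M)$ from the integrand to power $q_2-q_1$ and bound it by the endpoint norm just established. Concretely,
\begin{equation*}
\|f\|_{N_{p,q_2}(M)}^{q_2}
= \int_0^\infty \bigl(t^{1/p}\bar{f}(t,M)\bigr)^{q_2-q_1}\bigl(t^{1/p}\bar{f}(t,M)\bigr)^{q_1}\frac{dt}{t}
\leq \|f\|_{N_{p,\infty}(M)}^{q_2-q_1}\,\|f\|_{N_{p,q_1}(M)}^{q_1},
\end{equation*}
and combining this with the previous estimate $\|f\|_{N_{p,\infty}(M)}\lesssim \|f\|_{N_{p,q_1}(M)}$ yields $\|f\|_{N_{p,q_2}(M)}\lesssim \|f\|_{N_{p,q_1}(M)}$ after taking the $q_2$-th root. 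The constant depends only on $p$, $q_1$, $q_2$, as desired.

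Statement (ii) for the sequence space $n_{p,q}(W)$ is proved identically, replacing $\int_0^\infty \cdots \frac{dt}{t}$ by $\sum_{k\in\mathbb{N}}\cdots\frac{1}{k}$ and using the same two-step scheme: obtain the $q_2=\infty$ embedding from the discrete monotonicity of $\bar{a}_k(W)$ via the comparison $\sum_{j=1}^{k}j^{q_1/p-1}\asymp k^{q_1/p}$, and then interpolate for finite $q_2$ by splitting the summand as above. I do not foresee a genuine obstacle here; the only point worth being slightly careful about is the discrete comparison $\sum_{j=1}^{k}j^{q_1/p-1}\gtrsim k^{q_1/p}$ with a constant independent of $k$, which follows from a direct estimate separating the cases $q_1/p\geq 1$ and $q_1/p<1$.
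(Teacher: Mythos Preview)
Your argument is correct: the key monotonicity of $t\mapsto\bar f(t,M)$ (and $k\mapsto\bar a_k(W)$) reduces the embedding to the classical Lorentz-space scheme, and your two-step proof (endpoint $q_2=\infty$ first, then splitting the integrand for finite $q_2$) goes through without issue. The paper itself does not supply a proof of this proposition; it simply cites Remark~1 in \cite{NursultanovE2000} and Proposition~2 in \cite{NursultanovE1998}, so your write-up actually fills in what the paper leaves to the literature.
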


	Next, we give alternative expressions for the quasi-norms $\|\cdot\|_{N_{p,\infty}}$ and $\|\cdot\|_{n_{p,\infty}}$.
	
	\begin{proposition}\label{equiv norm}
		Let $(\Omega,\Sigma,\mu)$ be a measure space and $0< p<\infty$. Let $M\subset \Sigma$ be a fixed set, whose elements have finite positive measure. Then, for a function $f$ integrable over each $e\in M$, it follows
		\begin{equation*}
				\|f\|_{N_{p,\infty}(M)} = \sup_{e\in M} \frac{1}{|e|^{\frac{1}{p'}}} \left|\int_e f(x)d_\mu x\right|.
		\end{equation*}
	
		In particular, if $(\Omega,\Sigma,\mu) = (\mathbb{Z}, 2^{\mathbb{N}}, \#)$ and $W$ being some fixed set of finite non-empty  subsets of $\mathbb{Z}$, then
		\begin{equation*}
			\|a\|_{n_{p,\infty}(W)} = \sup_{e\in W}  \frac{1}{|e|^{\frac{1}{p'}}} \left|\sum_{k\in e} a_k\right|.
		\end{equation*}
		for any sequence of complex numbers $a = \{a_k\}_{k\in \mathbb{Z}}$.
	\end{proposition}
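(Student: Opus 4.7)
The plan is to unwind both expressions and observe they encode the same quantity via a simple optimization in $t$. Writing $1/p' = 1 - 1/p$ so that $|e|^{1/p}/|e| = 1/|e|^{1/p'}$, the right-hand side is exactly
\begin{equation*}
\sup_{e\in M} |e|^{1/p} \cdot \frac{1}{|e|}\left|\int_e f\, d_\mu x\right|,
\end{equation*}
while the left-hand side is
\begin{equation*}
\sup_{t>0} t^{1/p} \sup_{\substack{e\in M \\ |e|\geq t}} \frac{1}{|e|}\left|\int_e f\, d_\mu x\right|.
\end{equation*}
So the question reduces to showing that in the double supremum the optimal choice of $t$, for each fixed $e$, is $t=|e|$.

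For the lower bound on $\|f\|_{N_{p,\infty}(M)}$, I would fix an arbitrary $e\in M$, choose $t=|e|$, and use that $e$ is admissible in $\bar{f}(t,M)$ to get
\begin{equation*}
\|f\|_{N_{p,\infty}(M)} \geq |e|^{1/p} \bar{f}(|e|,M) \geq \frac{1}{|e|^{1/p'}}\left|\int_e f\, d_\mu x\right|,
\end{equation*}
then take the supremum over $e\in M$.

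For the upper bound, I would fix $t>0$, then for every admissible $e$ (that is, $e\in M$ with $|e|\geq t$) exploit monotonicity $t^{1/p}\leq |e|^{1/p}$ (here $1/p>0$ is essential) to write
\begin{equation*}
t^{1/p}\cdot\frac{1}{|e|}\left|\int_e f\, d_\mu x\right| \leq |e|^{1/p}\cdot\frac{1}{|e|}\left|\int_e f\, d_\mu x\right| = \frac{1}{|e|^{1/p'}}\left|\int_e f\, d_\mu x\right|.
\end{equation*}
Taking the supremum over admissible $e$ and then over $t>0$ gives the reverse inequality. The discrete statement then follows by specializing to $(\Omega,\Sigma,\mu)=(\mathbb{Z},2^{\mathbb{Z}},\#)$, so no separate argument is needed.

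There is no genuine obstacle here; the only thing to watch is the direction of the inequality $t^{1/p}\leq |e|^{1/p}$, which uses $p>0$, and the fact that $e$ itself is admissible in the restricted supremum defining $\bar{f}(|e|,M)$, which is immediate from the definition.
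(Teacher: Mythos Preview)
Your proposal is correct and follows essentially the same approach as the paper's proof: both directions rely on exactly the two observations you isolate, namely that choosing $t=|e|$ makes $e$ admissible in $\bar{f}(|e|,M)$, and that $t^{1/p}\leq |e|^{1/p}$ when $|e|\geq t$ and $p>0$. The paper presents these two inequalities in the same order and with the same justification, and likewise obtains the discrete case as a specialization.
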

	\begin{proof}
		For any $e_0 \in M$, we estimate
		\begin{multline*}
			\|f\|_{N_{p,\infty}(M)} = \sup_{t> 0} t^{\frac{1}{p}}\sup_{|e|\geq t, e\in M} \frac{1}{|e|} \left| \int_{e} f(x) d_\mu x \right| \geq |e_0|^{\frac{1}{p}}\sup_{|e|\geq |e_0|, e\in M} \frac{1}{|e|} \left| \int_{e} f(x) d_\mu x \right|\\
			 \geq \frac{1}{|e_0|^{1/p'}} \left| \int_{e_0} f(x) d_\mu x \right|.
		\end{multline*}
	Conversely, 
	\begin{multline*}
		\|f\|_{N_{p,\infty}(M)} = \sup_{t> 0} t^{\frac{1}{p}}\sup_{|e|\geq t, e\in M} \frac{1}{|e|} \left| \int_{e} f(x) d_\mu x \right| \sup_{t> 0} \leq \sup_{t> 0}\sup_{|e|\geq t, e\in M}|e|^{\frac{1}{p}} \frac{1}{|e|} \left| \int_{e} f(x) d_\mu x \right| \\
		= \sup_{e\in M}|e|^{\frac{1}{p}} \frac{1}{|e|} \left| \int_{e} f(x) d_\mu x \right| .
	\end{multline*}
	\end{proof}
	
	From now on, we only consider the cases where $\Omega$ is $\mathbb{R}$ or $\mathbb{Z}$ and $\mu$ being the Lebesgue or counting measure, respectively.
	\begin{lemma}\label{equiv p,1}
		Let $0<p<\infty$ and $0<q\leq \infty$, then the following statements are true:
		\begin{enumerate}[label=(\roman*)]
			\item Let $M$ be a family of some measurable sets in $\mathbb{R}$ with finite positive measures. Then we have the equivalence
			\begin{equation*}
				\|f\|_{N_{p,q}(M)} \approx 
				\begin{cases}
					\left(\sum_{k\in \mathbb{Z}} \left(2^{\frac{k}{p}} \bar{f}(2^k,M)\right)^q\right)^{1/q}, & q\neq \infty,\\
					\sup_{k\in \mathbb{Z}} 2^{\frac{k}{p}} \bar{f}(2^k,M), & q=\infty.
				\end{cases}
			\end{equation*}
			\item Let $W$ be a family of some finite non-empty sets in $\mathbb{Z}$. Then
			\begin{equation*}
				\|a\|_{n_{p,q}(W)} \approx
				\begin{cases}
						\left( \sum_{n\in \mathbb{N}_0} \left(2^{\frac{n}{p}}\bar{a}_{2^n}(W)\right)^q\right)^{1/q}, & q\neq \infty,\\
						\sup_{k\in \mathbb{N}_0} (2^{\frac{k}{p}}\bar{a}_{2^k}(W), & q=\infty.
				\end{cases}
			\end{equation*}
		\end{enumerate}
	\end{lemma}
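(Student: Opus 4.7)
The plan is to exploit the monotonicity of $t\mapsto \bar f(t,M)$ and $k\mapsto \bar a_k(W)$ to discretize each quasi-norm along powers of two. The key structural observation is that in Definitions \ref{NET space} and \ref{Net seq space}, the suprema are taken over sets $\{e\in M:|e|\geq t\}$ that shrink as $t$ grows; hence $\bar f(\cdot,M)$ and $\bar a_\cdot(W)$ are non-increasing in $t$ and $k$, respectively. This monotonicity is essentially all that is needed.

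For part (i), I would first split
\begin{equation*}
\int_0^\infty \bigl(t^{1/p}\bar f(t,M)\bigr)^q\frac{dt}{t} = \sum_{k\in\mathbb{Z}}\int_{2^k}^{2^{k+1}}\bigl(t^{1/p}\bar f(t,M)\bigr)^q\frac{dt}{t}.
\end{equation*}
On the dyadic piece $[2^k,2^{k+1})$ one has $t^{1/p}\approx 2^{k/p}$, $\int_{2^k}^{2^{k+1}}dt/t=\log 2$, and by monotonicity $\bar f(2^{k+1},M)\leq \bar f(t,M)\leq \bar f(2^k,M)$. The upper bound immediately gives $\|f\|_{N_{p,q}(M)}^q\lesssim \sum_k (2^{k/p}\bar f(2^k,M))^q$. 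For the reverse inequality I would use the lower bound and shift the summation index by one, which costs only a harmless constant factor $2^{q/p}$. The $q=\infty$ case is even simpler: $\sup_{t>0}t^{1/p}\bar f(t,M)$ decomposes as the supremum over $k\in\mathbb{Z}$ of $\sup_{t\in[2^k,2^{k+1})}t^{1/p}\bar f(t,M)$, and on each block monotonicity plus $t^{1/p}\approx 2^{k/p}$ gives equivalence with $2^{k/p}\bar f(2^k,M)$.

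For part (ii) I would carry out the discrete analog: partition $\mathbb{N}=\bigcup_{n\in\mathbb{N}_0}\{2^n,\ldots,2^{n+1}-1\}$ and observe that the $n$-th block contains $2^n$ indices on which $k^{1/p}\approx 2^{n/p}$, $1/k\approx 2^{-n}$, and, by monotonicity, $\bar a_{2^{n+1}}(W)\leq \bar a_k(W)\leq \bar a_{2^n}(W)$. Summing the roughly $2^n$ nearly-equal terms in the block produces a quantity comparable to $(2^{n/p}\bar a_{2^n}(W))^q$; summing over $n$ then yields the claim. The $q=\infty$ case is handled exactly as in (i).

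The only mild obstacle is the index shift that appears in the lower bounds, where $\bar f(t,M)$ is controlled from below by $\bar f(2^{k+1},M)$ rather than by the discretization point $\bar f(2^k,M)$ (and analogously for sequences). Since this introduces only a constant factor independent of $k$ and $n$, it is absorbed into the $\approx$ notation. No deeper ingredients, such as interpolation or rearrangement identities, are required.
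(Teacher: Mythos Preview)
Your proposal is correct and follows essentially the same approach as the paper: split the integral (resp.\ sum) into dyadic blocks $[2^k,2^{k+1})$ (resp.\ $\{2^n,\ldots,2^{n+1}-1\}$) and use that on each block the integrand is comparable to its value at the left endpoint. Your write-up is in fact more careful than the paper's, since you explicitly invoke the monotonicity of $t\mapsto\bar f(t,M)$ and $k\mapsto\bar a_k(W)$ and address the index shift in the lower bound, whereas the paper simply asserts the $\approx$ in one line.
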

	\begin{proof}
		By definition,
		\begin{multline*}
			\|f\|_{N_{p,q}(M)}^q  = \int_{0}^{\infty} \left(t^{1/p} \bar{f}(t,M)\right)^q\frac{dt}{t} = \sum_{k\in \mathbb{Z}} \int_{2^k}^{2^{k+1}} \left(t^{1/p}\bar{f}(t,M)\right)^q \frac{dt}{t}\\
			\approx  \sum_{k\in \mathbb{Z}} \left(2^{\frac{k}{p}} \bar{f}(2^k,M)\right)^q.
		\end{multline*}
		and
		\begin{multline*}
			\|a\|_{n_{p,q}(W)}^q = \sum_{k\in \mathbb{N}} \left(k^{1/p} \bar{a}_k(W)\right)^q \frac{1}{k} = \sum_{n\in \mathbb{N}_0}  \sum_{k=2^n}^{2^{n+1}-1}  \left(k^{1/p} \bar{a}_k(W)\right)^q \frac{1}{k} \approx \sum_{n\in \mathbb{N}_0} \left(2^{\frac{n}{p}}\bar{a}_{2^n}(W)\right)^q.
		\end{multline*}
	Similarly, one can obtain the corresponding formulas for the case $q = \infty$.
	\end{proof}
	Finally,  we will state known results which will be used later in this work. The first part of the following theorem was proved in \cite{NursultanovE1998DAN} and the second part in \cite[Theorem 3]{NursultanovE1998}
	\begin{theorem}\label{nursultanovs inequality}
		Let $2\leq p<\infty$ and $0<q\leq \infty$, then the following statements are true:
		\begin{enumerate}[label=(\roman*)]
			\item Let $M$ be the set of all finite intervals on $\mathbb{R}$. Let $f\in L_{p,q}(\mathbb{R})$, then
			\begin{equation*}
			\|\mathcal{F}f\|_{N_{p',q}(M)}\lesssim \|f\|_{L_{p,q}(\mathbb{R})}.
			\end{equation*}
			\item Let $W$ be the set of all finite intervals on $\mathbb{Z}$. Let $f\in L_{p,q}(0,1)$ and $f\sim\sum_{k\in\mathbb{Z}}a_ke^{2\pi i k x}$, then
			\begin{equation*}
			\|a\|_{n_{p',q}(W)}\lesssim \|f\|_{L_{p,q}(0,1)}.
			\end{equation*}
		\end{enumerate}
	\end{theorem}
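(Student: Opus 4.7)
The plan is to first establish the endpoint case $q = \infty$ by a direct duality argument and then deduce the general $q$ by real interpolation in the index $p$.

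For part (i) with $q = \infty$, Proposition \ref{equiv norm} rewrites $\|\mathcal{F}f\|_{N_{p', \infty}(M)} = \sup_{e \in M} |e|^{-1/p}\, |\int_e \mathcal{F}f(\xi)\, d\xi|$. For $e = [a, b]$, Fubini's theorem gives $\int_e \mathcal{F}f\, d\xi = \int_{\mathbb{R}} f(x) \psi_e(x)\, dx$ with $\psi_e(x) = (2\pi)^{-1/2} \int_a^b e^{-ix\xi}\, d\xi$, which satisfies $|\psi_e(x)| \lesssim \min(|e|, 1/|x|)$. The resulting rearrangement bound $\psi_e^*(t) \lesssim \min(|e|, 1/t)$, split at $t = 1/|e|$ and using $p' > 1$ for convergence of the tail, yields $\|\psi_e\|_{L_{p', 1}(\mathbb{R})} \lesssim |e|^{1/p}$. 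The Lorentz H\"older inequality then delivers $|\int f \psi_e| \leq \|f\|_{L_{p, \infty}} \|\psi_e\|_{L_{p', 1}} \lesssim |e|^{1/p} \|f\|_{L_{p, \infty}}$, and after dividing by $|e|^{1/p}$ and taking the supremum over $e \in M$ we obtain the desired bound when $q = \infty$.

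For $0 < q < \infty$, I fix auxiliary exponents $2 \leq p_0 < p < p_1 < \infty$ and apply the previous step at each $p_j$ to get the weak-type bounds $\mathcal{F} : L_{p_j, \infty}(\mathbb{R}) \to N_{p_j', \infty}(M)$ for $j = 0, 1$. Real interpolation with parameter $\theta \in (0, 1)$ chosen so that $1/p = (1-\theta)/p_0 + \theta/p_1$ then gives $\mathcal{F} : (L_{p_0, \infty}, L_{p_1, \infty})_{\theta, q} \to (N_{p_0', \infty}(M), N_{p_1', \infty}(M))_{\theta, q}$. The source space equals $L_{p, q}(\mathbb{R})$ by the classical Lorentz identification, and the target space equals $N_{p', q}(M)$ by the analogous identification, obtained by tracking the $K$-functional computation with the averaging quantity $\bar{f}(t, M)$ in the role of $f^*(t)$. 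Part (ii) is handled in exactly the same way: the Dirichlet-type polynomial $D_e(x) = \sum_{k \in e} e^{2\pi i k x}$ plays the role of $\psi_e$, the pointwise bound $|D_e(x)| \lesssim \min(|e|, 1/|\sin \pi x|)$ yields $\|D_e\|_{L_{p', 1}(0, 1)} \lesssim |e|^{1/p}$ by the same rearrangement estimate, and the Lorentz H\"older plus interpolation arguments transfer verbatim, with the sequence spaces $l_{p, q}(\mathbb{Z})$ and $n_{p, q}(W)$ in the roles of $L_{p, q}(\mathbb{R})$ and $N_{p, q}(M)$.

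The main obstacle is justifying the NET interpolation identity $(N_{p_0', \infty}(M), N_{p_1', \infty}(M))_{\theta, q} = N_{p', q}(M)$. Unlike the full Lorentz case, the restriction to a prescribed family $M$ of intervals severs the direct link with $f^*$, so one must verify the $K$-functional computation by hand; fortunately, the only features of $f^*$ used in the classical derivation are its monotonicity in $t$ and its natural scaling behaviour, and both are shared by $\bar{f}(\cdot, M)$, so the argument carries over with notational changes.
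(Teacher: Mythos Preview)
The paper does not actually prove this theorem; it is quoted as a known result, with part (i) attributed to \cite{NursultanovE1998DAN} and part (ii) to \cite[Theorem~3]{NursultanovE1998}. So there is no in-paper proof to compare against; your argument stands on its own, and it is correct.

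Your endpoint step is clean: the kernel bound $|\psi_e(x)|\lesssim\min(|e|,1/|x|)$ (respectively $|D_e(x)|\lesssim\min(|e|,1/|\sin\pi x|)$) together with the Lorentz H\"older inequality gives the $q=\infty$ case directly, and in fact uses only $1<p<\infty$, which is consistent with the Remark following the theorem in the paper.

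The one place you flag as delicate---the embedding $(N_{p_0',\infty}(M),N_{p_1',\infty}(M))_{\theta,q}\hookrightarrow N_{p',q}(M)$---does not require computing the interpolation space exactly. It follows from sublinearity of $g\mapsto\bar g(\cdot,M)$: for any decomposition $g=g_0+g_1$ and any $s>0$, writing $\alpha=1/p_1'-1/p_0'>0$ and $t=s^{-\alpha}$,
\[
s^{1/p'}\bar g(s,M)\le s^{\theta\alpha}\|g_0\|_{N_{p_0',\infty}}+s^{-(1-\theta)\alpha}\|g_1\|_{N_{p_1',\infty}}
= t^{-\theta}\bigl(\|g_0\|_{N_{p_0',\infty}}+t\,\|g_1\|_{N_{p_1',\infty}}\bigr),
\]
so $s^{1/p'}\bar g(s,M)\le t^{-\theta}K(t,g)$; integrating against $ds/s$ and changing variables yields $\|g\|_{N_{p',q}(M)}\lesssim\|g\|_{\theta,q}$. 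This is exactly the monotonicity-and-scaling argument you describe, so your sketch is fully justified.
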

	
	\begin{remark}
		Originally, the above theorem was stated for $2<p<\infty$. However, by careful checking the proof, one can verify that Theorem \ref{nursultanovs inequality} holds also for $1<p<\infty$.
	\end{remark}

\section{Sufficient conditions}\label{sec_suf_cond}
In this section we obtain necessary conditions for the $L_p-L_q$ boundedness of Fourier multipliers, which imply the upper bounds in Theorems \ref{HT} and \ref{LT}.
\subsection{H\"ormander type theorems.}\label{Hsection}
We begin by proving H\"ormander type theorem for Fourier transform multipliers:
	\begin{theorem}\label{sufHtr}
		Let $1< p \leq 2 \leq q <\infty$ and $1/r = 1/p - 1/q$. Assume that $\lambda$ is a measurable function, then 
		\begin{equation*}
			\|\lambda\|_{M_p^q} \lesssim \sup_{k\in \mathbb{Z}} \|\lambda\|_{L_{r,\infty}(\Delta_{k})},
		\end{equation*}
	where
	$$\Delta_{k}: = (-2^{k+1}, -2^k] \cup [2^k,2^{k+1}).$$
	\end{theorem}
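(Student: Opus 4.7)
The plan is to reduce the estimate to the classical Hörmander inequality \eqref{H_t_cond} applied dyadic piece by dyadic piece, and then reassemble using the Littlewood--Paley square function theorem on both the input and output sides, with Minkowski's inequality bridging the $L_p$ and $\ell^2$ norms. The hypothesis $1 < p \leq 2 \leq q < \infty$ will be used in a symmetric way, twice on each side of the midpoint $L_2$.

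Write $\lambda = \sum_{k \in \mathbb{Z}} \lambda_k$ with $\lambda_k := \lambda \chi_{\Delta_k}$, and let $S_k := T_{\chi_{\Delta_k}}$ denote the rough dyadic Littlewood--Paley projection onto $\Delta_k$. Then $T_\lambda f = \sum_k T_{\lambda_k} f$, each piece $T_{\lambda_k} f$ is Fourier-supported in $\Delta_k$, and the identity $T_{\lambda_k} f = T_{\lambda_k} S_k f$ holds. On the output side, the Littlewood--Paley theorem (valid for $1 < q < \infty$) together with Minkowski's inequality in $L_{q/2}$ (valid since $q \geq 2$) gives
\begin{equation*}
\|T_\lambda f\|_{L_q} \approx \Bigl\| \bigl(\sum_k |T_{\lambda_k} f|^2\bigr)^{1/2} \Bigr\|_{L_q} \leq \Bigl( \sum_k \|T_{\lambda_k} f\|_{L_q}^2 \Bigr)^{1/2}.
\end{equation*}
Applying the classical Hörmander bound \eqref{H_t_cond} to $\lambda_k$, combined with $T_{\lambda_k} f = T_{\lambda_k} S_k f$, controls each term by $\|\lambda_k\|_{L_{r,\infty}(\mathbb{R})} \|S_k f\|_{L_p} = \|\lambda\|_{L_{r,\infty}(\Delta_k)} \|S_k f\|_{L_p}$, so pulling the supremum out of the $\ell^2$-sum reduces matters to estimating $\bigl(\sum_k \|S_k f\|_{L_p}^2\bigr)^{1/2}$.

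For this last sum the hypothesis $p \leq 2$ becomes essential. Minkowski's integral inequality in the direction $L_p(\ell^2) \hookrightarrow \ell^2(L_p)$, which holds precisely when $p \leq 2$, followed by the Littlewood--Paley square function theorem on the input side, yields
\begin{equation*}
\Bigl( \sum_k \|S_k f\|_{L_p}^2 \Bigr)^{1/2} \leq \Bigl\| \bigl(\sum_k |S_k f|^2\bigr)^{1/2} \Bigr\|_{L_p} \lesssim \|f\|_{L_p}.
\end{equation*}
Chaining the estimates produces $\|T_\lambda f\|_{L_q} \lesssim \sup_k \|\lambda\|_{L_{r,\infty}(\Delta_k)} \|f\|_{L_p}$, which is the claim.

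The main delicacy I expect is the symmetric bookkeeping of the four uses of the hypothesis: $p > 1$ and $q < \infty$ for the two Littlewood--Paley applications, and $q \geq 2$ and $p \leq 2$ for the two Minkowski inequalities, which must run in opposite directions on the two sides of $L_2$. Once these are lined up, no smoothing of the cutoffs $\chi_{\Delta_k}$ is required, since the Littlewood--Paley theory for the sharp dyadic projections $S_k$ is classical throughout $1 < p, q < \infty$, and the argument never leaves the range where the classical Hörmander theorem is itself available.
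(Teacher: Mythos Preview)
Your proof is correct and follows essentially the same architecture as the paper's: a Littlewood--Paley sandwich with Minkowski's inequality applied on both sides (using $q\geq 2$ on the output and $p\leq 2$ on the input). The only difference is that for the middle step you invoke the classical H\"ormander bound \eqref{H_t_cond} as a black box on each dyadic piece, whereas the paper unfolds that bound in place via the Hardy--Littlewood (Hausdorff--Young--Paley) inequality on the Fourier side together with H\"older in Lorentz spaces; since \eqref{H_t_cond} is itself proved by exactly those ingredients, the two arguments are the same up to packaging.
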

	
	\begin{proof}
		Let $f\in S(\mathbb{R})$, then 
		\begin{equation*}
		\|T_{\lambda}f\|_{L_q} = \|\mathcal{F}^{-1}\lambda\mathcal{F}f\|_{L_q}= \left\| \sum_{k\in\mathbb{Z}} \int_{\Delta_{k}} e^{i\xi\cdot x}\lambda(\xi) \mathcal{F}f(\xi) d\xi \right\|_{L_q} =  \left\|\sum_{k\in\mathbb{Z}} \mathcal{F}^{-1}\lambda\chi_{\Delta_{k}}\mathcal{F}f\right\|_{L_q},
		\end{equation*}
		where $\chi_{\Delta_{k}}$ is the indicator function of $\Delta_{k}$, that is
		\begin{equation*}
			\chi_{\Delta_{k}}(\xi):=
			\begin{cases}
				1 & \text{if } \xi\in \Delta_{k},\\
				0 & \text{otherwise}.
			\end{cases}
		\end{equation*}
		By using the Littlewood-Paley inequality, see \cite{Stein}, we write
		\begin{equation*}
		\|T_{\lambda}f\|_{L_q} \lesssim \left\|\left(\sum_{k\in\mathbb{Z}} \left|\mathcal{F}^{-1}\lambda\chi_{\Delta_{k}}\mathcal{F}f\right|^2\right)^{\frac{1}{2}}\right\|_{L_q}.
		\end{equation*}
		Since $q\geq2$ the Minkowski inequality gives
		\begin{equation*}
		\|T_{\lambda}f\|_{L_q} \lesssim \left(\sum_{k\in\mathbb{Z}} \left\|\mathcal{F}^{-1}\lambda\chi_{\Delta_{k}}\mathcal{F}f\right\|_{L_q}^2\right)^{\frac{1}{2}}.
		\end{equation*}
		Further, the Hardy-Littlewood inequality (if $q>2$) or the Parseval identity (if $q=2$) gives 
		\begin{equation*}
		\|T_{\lambda}f\|_{L_q} \lesssim \left(\sum_{k\in\mathbb{Z}} \left\|\lambda\chi_{\Delta_{k}}\mathcal{F}f\right\|_{L_{q',q}}^2\right)^{\frac{1}{2}}.
		\end{equation*}
		If $r = \infty$, that is $p = q= 2$, we estimate
		\begin{multline*}
			\|T_{\lambda}f\|_{L_q}  \lesssim \sup_{k\in\mathbb{Z}}\|\lambda\|_{L_{\infty}(\Delta_k)}^2 \left(\sum_{k\in\mathbb{Z}} \left\|\chi_{\Delta_{k}}\mathcal{F}f\right\|_{L_{2,2}(\Delta_k)}^2\right)^{\frac{1}{2}} \\
			\lesssim \sup_{k\in \mathbb{Z}} \|\lambda\|_{L_{r,\infty}(\Delta_{k})} \left(\sum_{k\in\mathbb{Z}} \left\|\chi_{\Delta_{k}}\mathcal{F}f\right\|_{L_{p',q}(\Delta_k)}^2\right)^{\frac{1}{2}}.
		\end{multline*}
		Otherwise, when $r<\infty$, we use the H\"older inequality to derive the same estimate
		\begin{multline*}
		\|T_{\lambda}f\|_{L_q}  \lesssim \left(\sum_{k\in\mathbb{Z}}\|\lambda\|_{L_{r,\infty}(\Delta_k)}^2 \left\|\chi_{\Delta_{k}}\mathcal{F}f\right\|_{L_{p',q}(\Delta_k)}^2\right)^{\frac{1}{2}} \\
		\lesssim \sup_{k\in \mathbb{Z}} \|\lambda\|_{L_{r,\infty}(\Delta_{k})} \left(\sum_{k\in\mathbb{Z}} \left\|\chi_{\Delta_{k}}\mathcal{F}f\right\|_{L_{p',q}(\Delta_k)}^2\right)^{\frac{1}{2}}.
		\end{multline*}
		Since $p\leq q$, it follows that $L_{p',p}(\Delta_{k})\hookrightarrow L_{p',q}(\Delta_{k})$ and
		\begin{equation*}
			\|f\|_{L_{p',q}(\Delta_{k})} \leq C\|f\|_{L_{p',p}(\Delta_{k})},  \qquad \text{for } f \in L_{p',p}(\Delta_{k}),
		\end{equation*}
		where $C>0$ is independent on $k\in \mathbb{Z}$. Therefore, the penultimate inequality gives 	
		\begin{equation*}
		\|T_{\lambda}f\|_{L_q} 
		\lesssim \sup_{k\in \mathbb{Z}} \|\lambda\|_{L_{r,\infty}(\Delta_{k})} \left(\sum_{k\in\mathbb{Z}} \left\|\chi_{\Delta_{k}}\mathcal{F}f\right\|_{L_{p',p}(\Delta_k)}^2\right)^{\frac{1}{2}}.
		\end{equation*}
		We will repeat our steps in reverse order given that $p\leq 2$. The Hardy-Littlewood inequality (or Parseval identity if $p=2$) gives 
		\begin{equation*}
		\|T_{\lambda}f\|_{L_q} \lesssim  \sup_{k\in \mathbb{Z}} \|\lambda\|_{L_{r,\infty}(\Delta_{k})} \left(\sum_{k\in\mathbb{Z}} \left\|\mathcal{F}^{-1}\chi_{\Delta_{k}}\mathcal{F}f\right\|_{L_p}^2\right)^{\frac{1}{2}}.
		\end{equation*}
		Since $p\leq 2$, by Minkowski inequality, we obtain
		\begin{equation*}
		\|T_{\lambda}f\|_{L_q} \lesssim  \sup_{k\in \mathbb{Z}} \|\lambda\|_{L_{r,\infty}(\Delta_{k})}\left\|\left(\sum_{k\in\mathbb{Z}} \left|\mathcal{F}^{-1}\chi_{\Delta_{k}}\mathcal{F}f\right|^2\right)^{\frac{1}{2}}\right\|_{L_p}.
		\end{equation*}
		Finally, the Littlewood-Paley inequality implies that 
		\begin{equation*}
			\|T_{\lambda}f\|_{L_q} \lesssim \sup_{k\in \mathbb{Z}} \|\lambda\|_{L_{r,\infty}(\Delta_{k})} \|f\|_{L_p}.
		\end{equation*}
	\end{proof}
	Next, we obtain analogue of this theorem but for the Fourier series multipliers:
	\begin{theorem}\label{Suf condition 1(i)}
		Let $1< p \leq 2 \leq q <\infty$ and $1/r = 1/p - 1/q$. Let $\lambda = \{\lambda_k\}_{k\in \mathbb{Z}}$ be a sequence of complex numbers, then
		\begin{equation*}
			\|\lambda\|_{m_p^q} \lesssim \sup_{k\in \mathbb{N}_0} \|\lambda\|_{l_{r,\infty}(\delta_{k})},
		\end{equation*}
	where
	\begin{equation*}
		\delta_k:= 
		\begin{cases}
			 \{-2^{k+1}+1, \cdots, -2^k\}\cup \{2^k,\cdots,2^{k+1} - 1\}, & k\in \mathbb{N},\\
			 \{-1,0,1\}, & k=0.
		\end{cases}
	\end{equation*}
	\end{theorem}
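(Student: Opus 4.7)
The plan is to mirror the proof of Theorem \ref{sufHtr} step by step, replacing each continuous ingredient by its Fourier series counterpart on $(0,1)$. First, for $f \in L_p(0,1)$ with $f \sim \sum_{k\in\mathbb{Z}} a_k e^{2\pi i k x}$, I would write
$$T_\lambda f \sim \sum_{n\in\mathbb{N}_0} \Lambda_n f, \qquad \Lambda_n f := \sum_{k\in\delta_n} \lambda_k a_k e^{2\pi i k x},$$
and invoke the Littlewood-Paley inequality for trigonometric Fourier series with respect to the dyadic blocks $\delta_n$ to obtain
$$\|T_\lambda f\|_{L_q(0,1)} \lesssim \left\| \Bigl( \sum_{n\in\mathbb{N}_0} |\Lambda_n f|^2 \Bigr)^{1/2} \right\|_{L_q(0,1)}.$$
Since $q \geq 2$, the Minkowski inequality yields $\|T_\lambda f\|_{L_q} \lesssim \bigl(\sum_n \|\Lambda_n f\|_{L_q}^2\bigr)^{1/2}$.

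Next I would bound each block via the Hardy-Littlewood inequality for Fourier series (or Parseval when $q=2$): $\|\Lambda_n f\|_{L_q(0,1)} \lesssim \|\{\lambda_k a_k\}\|_{l_{q',q}(\delta_n)}$. The crucial step is Hölder's inequality in Lorentz sequence spaces, which applies because $1/q' = 1/r + 1/p'$ (a direct computation from $1/r = 1/p - 1/q$):
$$\|\{\lambda_k a_k\}\|_{l_{q',q}(\delta_n)} \lesssim \|\lambda\|_{l_{r,\infty}(\delta_n)} \, \|\{a_k\}\|_{l_{p',q}(\delta_n)}.$$
This factor of $\|\lambda\|_{l_{r,\infty}(\delta_n)}$ is controlled by $\sup_{n\in\mathbb{N}_0}\|\lambda\|_{l_{r,\infty}(\delta_n)}$, which pulls out of the $\ell^2$-sum in $n$.

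To close the argument, I would reverse the above chain on the $L_p$ side. Using $p\leq q$, the embedding $l_{p',p}(\delta_n) \hookrightarrow l_{p',q}(\delta_n)$ holds with a constant uniform in $n$, so I can replace $l_{p',q}$ by $l_{p',p}$ inside the square-sum. Then the Hardy-Littlewood inequality (now in the direction $L_p \to l_{p',p}$, using $p\leq 2$) returns the expression to $L_p$ norms of the blocks $\mathcal{F}^{-1}(\chi_{\delta_n}\mathcal{F}f)$, the Minkowski inequality (valid since $p\leq 2$) moves the $\ell^2$-sum inside the $L_p$ norm, and a final application of the Littlewood-Paley inequality on $(0,1)$ gives $\|f\|_{L_p}$.

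The main obstacle I anticipate is justifying the Littlewood-Paley inequality in the precise form needed here, because of the irregular first block $\delta_0=\{-1,0,1\}$ instead of a clean dyadic shell. However, the standard Littlewood-Paley theorem for $(0,1)$ is usually stated for the dyadic partition $\{2^n \leq |k| < 2^{n+1}\}$ for $n \geq 0$ together with the singleton $\{0\}$, and absorbing $k=\pm 1$ into $\delta_0$ only alters the square function by a bounded perturbation. Beyond this bookkeeping, the argument is algorithmically identical to the proof of Theorem \ref{sufHtr}, with integrals over $\Delta_k$ replaced by sums over $\delta_k$ and $L_{p,q}(\mathbb{R})$ replaced by $l_{p,q}(\mathbb{Z})$.
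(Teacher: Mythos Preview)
Your proposal is correct and follows essentially the same route as the paper's proof: Littlewood--Paley on the dyadic blocks $\delta_n$, Minkowski with $q\ge 2$, Hardy--Littlewood/Parseval to pass to $l_{q',q}(\delta_n)$, H\"older in Lorentz spaces to factor out $\|\lambda\|_{l_{r,\infty}(\delta_n)}$, and then the reverse chain on the $L_p$ side. The only detail the paper adds that you do not mention explicitly is the separate treatment of the endpoint $r=\infty$ (i.e.\ $p=q=2$), where the H\"older step is replaced by a trivial $l_\infty$ bound.
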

	\begin{proof}
		Let $f\in L_p(0,1)$ and $a = \{a_k\}_{k\in \mathbb{Z}}$ be its Fourier coefficients. By using the Littlewood-Paley inequality, we write
		\begin{equation*}
			\|T_\lambda f\|_{L_q(0,1)} \lesssim \left\| \left( \sum_{k\in \mathbb{N}_0} \left|\sum_{m\in \delta_k} \lambda_m a_me^{2\pi imx}\right|^2 \right)^{1/2}\right\|_{L_{q}(0,1)}.
		\end{equation*}
		Since $q\geq 2$, by the Minkowski inequality, we obtain
		\begin{equation*}
			\|T_\lambda f\|_{L_q(0,1)} \lesssim  \left( \sum_{k\in \mathbb{N}_0} \left\|\sum_{m \in \delta_{k}}\lambda_m a_me^{2\pi imx}\right\|_{L_{q}(0,1)}^2 \right)^{1/2}.
		\end{equation*}
		Further, the Hardy-Littlewood inequality (if $q>2$) or the Parseval identity (if $q=2$) gives 
		\begin{equation*}
			\|T_\lambda f\|_{L_q(0,1)} \lesssim \left( \sum_{k\in \mathbb{N}_0} \|\lambda a\|_{l_{q',q}\left(\delta_{k}\right)}^2 \right)^{1/2}.
		\end{equation*}
		If $r = \infty$, that is $p = q =2$, we estimate
		\begin{multline*}
			\|T_\lambda f\|_{L_q(0,1)} \lesssim \sup_{k\in \mathbb{N}_0}\|\lambda\|_{l_{\infty}\left(\delta_{k}\right)}\left( \sum_{k\in \mathbb{N}_0} \| a\|_{l_{2,2}\left(\delta_{k}\right)}^2 \right)^{1/2}\\
			\lesssim \sup_{k\in \mathbb{N}_0} \|\lambda\|_{l_{r,\infty}(\delta_{k})} \left(\sum_{k\in \mathbb{N}_0} \|a\|_{l_{p',q}(\delta_{k})}^2 \right)^{1/2}.
		\end{multline*}
		Otherwise, when $r<\infty$, we use the H\"older inequality to derive the same estimate
		\begin{multline*}
			\|T_\lambda f\|_{L_q(0,1)} \lesssim \left(\sum_{k\in \mathbb{N}_0} \|\lambda\|_{l_{r,\infty}\left(\delta_{k}\right)}^2\|a\|_{l_{p',q}\left(\delta_{k}\right)}^2\right)^{1/2}\\
			\lesssim \sup_{k\in \mathbb{N}_0} \|\lambda\|_{l_{r,\infty}(\delta_{k})} \left(\sum_{k\in \mathbb{N}_0} \|a\|_{l_{p',q}(\delta_{k})}^2 \right)^{1/2}
		\end{multline*}
		Since $p\leq q$, we know that $l_{p',p} \hookrightarrow l_{p',q}$ and the corresponding inequality does not depend on $k\in \mathbb{N}_0$. Therefore, the last estimate gives
		\begin{equation*}
			\|T_\lambda f\|_{L_q(0,1)}
			\lesssim \sup_{k\in \mathbb{N}_0} \|\lambda\|_{l_{r,\infty}(\delta_{k})}\left(\sum_{k\in \mathbb{N}_0} \|a\|_{l_{p',p}(\delta_{k})}^2 \right)^{1/2}.
		\end{equation*}
		We will repeat our steps in reverse order given that $p\leq 2$. The Hardy-Littlewood inequality (or Parseval identity if $p=2$) gives 
		\begin{equation*}
			\|T_\lambda f\|_{L_q(0,1)} 
			\lesssim \sup_{k\in \mathbb{N}_0} \|\lambda\|_{l_{r,\infty}(\delta_{k})}  \left( \sum_{k\in \mathbb{N}_0} \left\|\sum_{m\in \delta_{k}} a_me^{2\pi imx}\right\|_{L_{p,p}(0,1)}^2  \right)^{1/2}.
		\end{equation*}
		By the Minkowski inequality,
		\begin{equation*}
			\|T_\lambda f\|_{L_q} 
			\lesssim  \sup_{k\in \mathbb{N}_0} \|\lambda\|_{l_{r,\infty}(\delta_{k})} \left\| \left( \sum_{k\in \mathbb{N}_0} \left(\sum_{m\in \delta_{k} } a_me^{2\pi imx}\right)^2 \right)^{1/2}\right\|_{L_{p}(0,1)}.
		\end{equation*}
		Finally, Littlewood-Paley inequality implies
		\begin{equation*}
			\|T_\lambda f\|_{L_q(0,1)}\lesssim \sup_{k\in \mathbb{N}_0} \|\lambda\|_{l_{r,\infty}(\delta_{k})} \|f\|_{L_p(0,1)}.
		\end{equation*}
	\end{proof}
	\subsection{Lizorkin type theorems} Here, we obtain Lizorkin type theorems. We start with the Fourier transform multipliers:
	\begin{theorem}
		Let $1<p<q<\infty$ and $\lambda$ be a real-valued function on $\mathbb{R}$ which is absolutely continuous on $(-\infty,0]$ and $[0,\infty)$ such that 
		\begin{equation}\label{LT1}
			\lambda(\xi) \rightarrow 0 \quad as \quad |\xi| \rightarrow \infty,
		\end{equation}
		\begin{equation}\label{LT2}
			\sup_{k\in \mathbb{Z}} 2^{k(\frac{1}{p} - \frac{1}{q})} \int_{\Delta_k} |\lambda'(\xi)| d\xi < A <\infty,
		\end{equation}
		for some constant $A>0$ and
		$$\Delta_{k}: = (-2^{k+1}, -2^k] \cup [2^k,2^{k+1}).$$
		Then $\lambda \in M_p^q$ and
		\begin{equation*}
			\|\lambda\|_{M_p^q} \lesssim A.
		\end{equation*}
	\end{theorem}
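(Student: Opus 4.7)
The natural plan is to reduce the Lizorkin-type hypothesis on $\lambda'$ to the H\"ormander-type hypothesis on $\lambda$ already handled by Theorem \ref{sufHtr}. That theorem says $\|\lambda\|_{M_p^q} \lesssim \sup_{k\in\mathbb{Z}} \|\lambda\|_{L_{r,\infty}(\Delta_k)}$, so it suffices to show that the quantity $A$ controls $\|\lambda\|_{L_{r,\infty}(\Delta_k)}$ uniformly in $k$. Because $|\Delta_k| = 2^{k+1}$ is finite, the embedding $L_\infty(\Delta_k) \hookrightarrow L_{r,\infty}(\Delta_k)$ gives
\[
 \|\lambda\|_{L_{r,\infty}(\Delta_k)} \lesssim |\Delta_k|^{1/r}\, \|\lambda\|_{L_\infty(\Delta_k)} \lesssim 2^{k/r}\, \|\lambda\|_{L_\infty(\Delta_k)},
\]
so the whole task reduces to establishing a pointwise bound $\|\lambda\|_{L_\infty(\Delta_k)} \lesssim A\, 2^{-k/r}$.

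To obtain such a pointwise estimate, I will exploit the absolute continuity together with the decay condition $\lambda(\xi)\to 0$ as $|\xi|\to\infty$. For $\xi \in [2^k, 2^{k+1})$ this gives the representation $\lambda(\xi) = -\int_\xi^\infty \lambda'(t)\,dt$, hence
\[
 |\lambda(\xi)| \leq \int_\xi^\infty |\lambda'(t)|\,dt \leq \sum_{m \geq k} \int_{\Delta_m \cap [0,\infty)} |\lambda'(t)|\,dt \leq \sum_{m \geq k} \int_{\Delta_m} |\lambda'(t)|\,dt.
\]
The symmetric argument on $(-\infty, -2^k]$ uses $\lambda(\xi) = \int_{-\infty}^\xi \lambda'(t)\,dt$ and yields the same bound on the negative piece of $\Delta_k$.

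By hypothesis \eqref{LT2}, $\int_{\Delta_m} |\lambda'(t)|\,dt \leq A\, 2^{-m/r}$, and since $1 < p < q < \infty$ forces $r < \infty$, the geometric series converges:
\[
 \sum_{m \geq k} 2^{-m/r} \lesssim 2^{-k/r}.
\]
Combining these ingredients gives $\|\lambda\|_{L_\infty(\Delta_k)} \lesssim A\, 2^{-k/r}$, so that $\|\lambda\|_{L_{r,\infty}(\Delta_k)} \lesssim 2^{k/r}\cdot A\, 2^{-k/r} = A$ uniformly in $k\in\mathbb{Z}$. Invoking Theorem \ref{sufHtr} concludes the argument.

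The only delicate point is ensuring the geometric sum over $m \geq k$ indeed converges, which is exactly why the strict inequality $p < q$ (equivalently $r < \infty$) is assumed; this is the step that would fail in the endpoint case $p = q$, consistent with the fact that the Mikhlin--H\"ormander rather than the Lizorkin philosophy is needed there. Everything else is a routine application of absolute continuity, Fubini-style manipulation of $\int_\xi^\infty \lambda'$, and the trivial embedding $L_\infty \hookrightarrow L_{r,\infty}$ on a set of finite measure.
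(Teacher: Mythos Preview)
Your argument is clean and correct on its own terms, but it does not prove the stated theorem. The hypothesis here is $1<p<q<\infty$, whereas Theorem~\ref{sufHtr}, on which your entire reduction rests, requires $1<p\le 2\le q<\infty$. So your proof covers only the case where $p$ and $q$ are separated by $2$; for, say, $p=3$, $q=5$ the H\"ormander-type input is simply unavailable, and your argument says nothing. This is not a technicality: the whole point of the Lizorkin-type result (as the introduction emphasizes) is precisely that it trades regularity of $\lambda$ for the freedom to take $p,q$ on the same side of $2$.

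The paper's proof is designed exactly to avoid this restriction. Instead of passing through $L_{r,\infty}$ control of $\lambda$, it works directly with the bilinear form $\int \lambda\,\mathcal{F}f\,\mathcal{F}g$, integrates by parts to transfer the derivative onto $\phi(\xi)=\int_0^\xi \mathcal{F}f\,\mathcal{F}g$, and then bounds the result using the NET-space estimate of Theorem~\ref{nursultanovs inequality} together with O'Neil's convolution inequality. This yields $\|T_\lambda\|_{L_{p,1}\to L_{q,\infty}}\lesssim A$ for \emph{every} $1<p<q<\infty$, and Marcinkiewicz--Calder\'on interpolation (along the line $\tfrac1p-\tfrac1q=\tfrac1r$) upgrades this to $L_p\to L_q$. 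Your pointwise bound $\|\lambda\|_{L_\infty(\Delta_k)}\lesssim A\,2^{-k/r}$ is a nice observation and would give a shorter proof in the restricted range $p\le 2\le q$, but it cannot substitute for the interpolation step that carries the full range.
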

	\begin{proof}
		First, we prove that $T_\lambda$ is a bounded operator from $L_{p,1}(\mathbb{R})$ to $L_{q,\infty}(\mathbb{R})$. We estimate
		\begin{multline*}
			\|T_\lambda\|_{L_{p,1} \mapsto L_{q,\infty}} = \sup_{\|f\|_{L_{p,1}} = 1} \|T_{\lambda}f\|_{L_{q,\infty}} \lesssim \sup_{\|f\|_{L_{p,1}} = 1} \|T_{\lambda}f\|_{L_{q}}\\
			 = \sup_{\|f\|_{L_{p,1}} =\|g\|_{L_{q'}} = 1} \int_{\mathbb{R}} T_{\lambda}f(x)g(x) dx.
		\end{multline*}
		Then, by the Parseval's identity, we obtain
		\begin{align}\label{p1qinfty est}
			\|T_\lambda\|_{L_{p,1} \mapsto L_{q,\infty}} \lesssim \sup_{\|f\|_{L_{p,1}} =\|g\|_{L_{q'}} = 1} \int_{\mathbb{R}} \lambda(\xi) \mathcal{F}f(\xi) \mathcal{F}g(\xi) d\xi.
		\end{align}
		Let us denote
		\begin{equation*}
			\phi(\xi) := \int_{0}^{\xi}\mathcal{F}f(\zeta) \mathcal{F}g(\zeta)d\zeta, \qquad I_1:= \int_{0}^{\infty} \lambda(\xi) \phi'(\xi) d\xi, \qquad I_2:= \int_{-\infty}^0 \lambda(\xi) \phi'(\xi) d\xi.
		\end{equation*}
		By integration by parts, we obtain
		\begin{equation*}
			|I_1| = \left|\int_{0}^{\infty} \lambda(\xi) \phi'(\xi)d\xi\right| = \left|\int_{0}^{\infty} \lambda'(\xi) \phi(\xi)d\xi\right| = \left|\sum_{k\in \mathbb{Z}} \int_{\Delta_{k}^{+}} \lambda'(\xi)2^{k\left(\frac{1}{p} - \frac{1}{q} \right)} 2^{k\left(\frac{1}{q} - \frac{1}{p} \right)}\phi(\xi)d\xi\right| ,
		\end{equation*}
		where $\Delta_{k}^{+}:=[2^k,2^{k+1})$. Using the hypothesis of the theorem, we conclude that
		\begin{multline*}
			|I_1|\leq A \sum_{k\in \mathbb{Z}} 2^{k\left(\frac{1}{q} - \frac{1}{p} + 1\right)} \sup_{\xi\in \Delta_{k}^+}\frac{1}{2^k} |\phi(\xi)| \lesssim  A \sum_{k\in \mathbb{Z}} 2^{k\left(\frac{1}{q} + \frac{1}{p'}\right)} \sup_{\xi \in\Delta_k^+}\frac{1}{\xi} \left|\int_{0}^\xi \mathcal{F}f(\zeta) \mathcal{F}g(\zeta)d\zeta\right|\\
			\lesssim A\sum_{k\in \mathbb{Z}} 2^{k\left(\frac{1}{q} + \frac{1}{p'}\right)} \sup_{e\in M, |e|\geq 2^k}\frac{1}{|e|} \left|\int_{e} \mathcal{F}f(\zeta) \mathcal{F}g(\zeta)d\zeta\right|.
		\end{multline*}
		Let $r>0$ be such that 
		$1/r = 1/q + 1/p'$, then Lemma \ref{equiv p,1} implies
		\begin{equation*}
			|I_1| \lesssim A\|\mathcal{F}f\mathcal{F}g\|_{N_{r,1}(M)} = A\|\mathcal{F}(f*g)\|_{N_{r,1}(M)}.
		\end{equation*}
		By Theorem \ref{nursultanovs inequality} and O'Neil inequality, we obtain 
		$$|I_1| \lesssim A \|f*g\|_{L_{r',1}} \lesssim A\|f\|_{L_{p,1}}\|g\|_{L_{q',\infty}}.$$
		Similarly, one can derive the same upper-bound for $|I_2|$. Putting these inequalities into \eqref{p1qinfty est} gives
		\begin{equation}\label{p1 qinfty series}
			\|T_\lambda\|_{L_{p,1} \mapsto L_{q,\infty}} \lesssim A.
		\end{equation}
		Let us pic $p_0, p_1$ such that $1<p_0 < p <p_1<\infty$ and choose $q_0$, $q_1$ so that
		\begin{equation}\label{pq0 pq1 r series}
			\frac{1}{p_0} - \frac{1}{q_0} = \frac{1}{p_1} - \frac{1}{q_1} = \frac{1}{p} - \frac{1}{q}.
		\end{equation}
		Then, by \eqref{p1 qinfty series}, we know that 
		\begin{equation*}
			\|T_\lambda\|_{L_{p_j,1} \mapsto L_{q_j,\infty}} \lesssim A \qquad \text{for } j=0,1.
		\end{equation*}
		Since $p_0 < p <p_1$, there exists $0<\theta<1$ such that
		\begin{equation*}
			\frac{1}{p} = \frac{1-\theta}{p_0} + \frac{\theta}{p_1},
		\end{equation*}
		and hence, the relation \eqref{pq0 pq1 r series} gives
		\begin{equation*}
			\frac{1}{q} = \frac{1-\theta}{q_0} + \frac{\theta}{q_1},
		\end{equation*}
		Therefore, from the Marcinkiewicz-Calderon's interpolation theorem, it follows that $$\|\lambda\|_{M_p^q} = \|T_\lambda\|_{L_p\mapsto L_q} \lesssim A.$$
	\end{proof}
	Analogue of this result holds for Fourier series multipliers:
	\begin{theorem}
		Let $1<p<q<\infty$ and $1/r=1/p - 1/q$. Let $\lambda = \{\lambda_k\}_{k\in \mathbb{Z}}$ be a sequence such that 
		\begin{equation*}
			\lambda_k \rightarrow 0 \quad as \quad k\rightarrow\infty
		\end{equation*}
		and
		\begin{equation*}
			\sup_{k\in \mathbb{N}_0} 2^{\frac{k}{r}}\sum_{m=2^k}^{2^{k+1} - 1} \left(|\lambda_{ -m} - \lambda_{ -m + 1}| + |\lambda_{m} - \lambda_{ m - 1}|\right) \leq A,
		\end{equation*}
		for some constant $A>0$. Then $\lambda \in m_p^q$ and $\|\lambda\|_{m_p^q}\lesssim A$.
	\end{theorem}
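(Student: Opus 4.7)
The plan is to adapt the continuous Lizorkin argument to the discrete setting, with Abel summation in place of integration by parts. As in the continuous proof, it suffices to establish the weak endpoint bound $\|T_\lambda\|_{L_{p,1}(0,1)\to L_{q,\infty}(0,1)}\lesssim A$; the strong-type conclusion then follows by running the same argument for two pairs $(p_0,q_0)$, $(p_1,q_1)$ with $1<p_0<p<p_1<\infty$ and $1/p_j-1/q_j=1/r$, and applying the Marcinkiewicz-Calderon interpolation theorem exactly as in the continuous case. Note that the hypothesis depends only on $r$, so the constant $A$ is the same at both endpoints.

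For the endpoint bound, reduce by density to $f$ a trigonometric polynomial with Fourier coefficients $a=\{a_k\}$, and pair against $g\in L_{q'}(0,1)$ with coefficients $b=\{b_k\}$. By Parseval,
\begin{equation*}
\int_{0}^{1}T_\lambda f(x)\overline{g(x)}\,dx=\sum_{k\in\mathbb{Z}}\lambda_k a_k\overline{b_k}.
\end{equation*}
Setting $c_k:=a_k\overline{b_k}$ and $\phi_n:=\sum_{k=0}^{n}c_k$, Abel summation yields
\begin{equation*}
\sum_{k=0}^{\infty}\lambda_k c_k=\sum_{k=0}^{\infty}(\lambda_k-\lambda_{k+1})\phi_k,
\end{equation*}
since the boundary term $\lambda_N\phi_N$ vanishes in the limit (by $\lambda_N\to 0$ and the eventual constancy of $\phi_N$). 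The sum over $k<0$ is treated symmetrically, using the other half of the hypothesis.

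Next, I would carry out a dyadic decomposition. Writing $\delta_n^+:=\{2^n,\ldots,2^{n+1}-1\}$ and comparing forward with backward differences (shifting the index by one loses at worst a single boundary term, which is absorbed into the adjacent block), the hypothesis gives
\begin{equation*}
\sum_{k\in\delta_n^+}|\lambda_k-\lambda_{k+1}|\lesssim A\cdot 2^{-n/r}.
\end{equation*}
On the other hand, for $k\in\delta_n^+$ the discrete interval $\{0,1,\ldots,k\}$ has length at least $2^n$, so letting $W$ be the family of all discrete intervals in $\mathbb{Z}$,
\begin{equation*}
\sup_{k\in\delta_n^+}|\phi_k|\leq 2^{n+1}\bar{c}_{2^n}(W).
\end{equation*}
Combining and summing in $n$, then applying Lemma \ref{equiv p,1},
\begin{equation*}
\Big|\sum_{k\geq 0}\lambda_k c_k\Big|\lesssim A\sum_{n\geq 0}2^{n/r'}\bar{c}_{2^n}(W)\approx A\|c\|_{n_{r',1}(W)}.
\end{equation*}

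To finish, observe that $c_k=\widehat{f\ast G}(k)$ with $G(x):=\overline{g(-x)}$, and in particular $\|G\|_{L_{q',\infty}}=\|g\|_{L_{q',\infty}}\leq\|g\|_{L_{q'}}$. Since $1<r<\infty$, Theorem \ref{nursultanovs inequality}(ii) (with the subsequent remark extending its range) gives $\|c\|_{n_{r',1}(W)}\lesssim\|f\ast G\|_{L_{r,1}(0,1)}$, and O'Neil's convolution inequality on $\mathbb{T}$, together with the balance $1+1/r=1/p+1/q'$, yields $\|f\ast G\|_{L_{r,1}}\lesssim\|f\|_{L_{p,1}}\|g\|_{L_{q',\infty}}$. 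Assembling these estimates and taking the supremum over $\|g\|_{L_{q'}}=1$ produces $\|T_\lambda f\|_{L_q}\lesssim A\|f\|_{L_{p,1}}$, which implies the desired weak endpoint bound. The main obstacle is the discrete bookkeeping: reindexing forward versus backward differences to match the hypothesis, justifying Abel summation via trigonometric polynomial density in $L_{p,1}$, and verifying that Theorem \ref{nursultanovs inequality}(ii) and O'Neil's inequality are available on $\mathbb{T}$ uniformly in the interpolation endpoints.
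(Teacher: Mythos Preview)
Your proposal is correct and follows essentially the same route as the paper: duality plus Parseval, Abel summation exploiting $\lambda_k\to 0$, dyadic regrouping to insert the hypothesis, identification of the resulting sum as $\|ab\|_{n_{r',1}(W)}$ via Lemma~\ref{equiv p,1}, then Theorem~\ref{nursultanovs inequality}(ii) and O'Neil to reach $\|f\|_{L_{p,1}}\|g\|_{L_{q',\infty}}$, followed by Marcinkiewicz--Calderon interpolation. The only cosmetic differences are that the paper introduces an auxiliary exponent $\tau$ with $1/\tau=1-1/r$ (i.e.\ $\tau=r'$) rather than writing $r'$ directly, and does not spell out the density or conjugation details you flag.
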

	
	\begin{proof}
		First, we prove that $T_\lambda$ is a bounded operator from $L_{p,1}(0,1)$ to $L_{q,\infty}(0,1)$. We estimate
		\begin{align*}
			\|T_\lambda\|_{L_{p,1} \mapsto L_{q,\infty}} = \sup_{\|f\|_{L_{p,1}}=1} \|T_\lambda f\|_{L_{q,\infty}} \leq \sup_{\|f\|_{L_{p,1}}=1} \|T_{\lambda}f\|_{L_{q}} \leq \sup_{\|f\|_{L_{p,1}}=\|g\|_{L_{q'}}=1} \int_{0}^{1} T_{\lambda}f(x)g(x)dx.
		\end{align*}
		Therefore
		\begin{equation*}
			\|T_\lambda\|_{L_{p,1} \mapsto L_{q,\infty}} \leq \sup_{\|f\|_{L_{p,1}}=\|g\|_{L_{q'}}=1} \left|\sum_{m\in \mathbb{Z}} \lambda_m a_m b_m\right|,
		\end{equation*}
		where $\{a_k\}$ and $\{b_k\}$ are Fourier coefficients of functions $f$ and $g$, respectively. Since $\lambda_k\rightarrow 0$ as $k\rightarrow\infty$, by using Abel transform, we derive
		\begin{multline*}
			\|T_\lambda\|_{L_{p,1} \mapsto L_{q,\infty}} \\
			\lesssim \sup_{\|f\|_{L_{p,1}}=\|g\|_{L_{q'}}=1} \left( \sum_{m=1}^{\infty} |\lambda_m - \lambda_{m-1}| \left|\sum_{l=0}^{m - 1}a_lb_l\right| + \sum_{m=1}^{\infty} |\lambda_{-m} - \lambda_{-m+1}| \left|\sum_{l=0}^{m - 1}a_{-l}b_{-l}\right| \right)
		\end{multline*}
		Then, we estimate
		\begin{multline*}
			\|T_\lambda\|_{L_{p,1} \mapsto L_{q,\infty}} \\
			 \leq  \sup_{\|f\|_{L_{p,1}}=\|g\|_{L_{q'}}=1} \sum_{k=0}^{\infty} \sup_{e\in W, 2^k\leq|e|<2^{k+1}}\left|\sum_{l\in e}a_lb_l\right| \sum_{m=2^k}^{2^{k+1} - 1}\left( |\lambda_m - \lambda_{m-1}| + |\lambda_{-m} - \lambda_{-m+1}|\right)
		\end{multline*}
		Using the theorem's conditions, we obtain
		\begin{align*}
			\|T_\lambda\|_{L_{p,1} \mapsto L_{q,\infty}} &\leq  A \sup_{\|f\|_{L_{p,1}}=\|g\|_{L_{q'}}=1} \sum_{k=0}^{\infty} (2^k)^{1 - \frac{1}{r}} \sup_{e\in W, 2^k\leq|e|<2^{k+1}}\frac{1}{2^k}\left|\sum_{l\in e}a_lb_l\right| \\
			&\lesssim  A \sup_{\|f\|_{L_{p,1}}=\|g\|_{L_{q'}}=1} \sum_{k=0}^{\infty} (2^k)^{1 - \frac{1}{r}} \sup_{e\in W, |e|\geq 2^k}\frac{1}{|e|}\left|\sum_{l\in e}a_lb_l\right| .
		\end{align*}
		Let $\tau >0$ be a number such that $1/\tau = 1 - 1/r$. Using Lemma \ref{equiv p,1}, we derive 
		\begin{equation*}
				\|T_\lambda\|_{L_{p,1} \mapsto L_{q,\infty}} \lesssim A \sup_{\|f\|_{L_{p,1}}=\|g\|_{L_{q'}}=1} \|ab\|_{n_{\tau, 1}}.
		\end{equation*}
		From Theorem \ref{nursultanovs inequality}, it follows
		\begin{equation*}
			\|T_\lambda\|_{L_{p,1} \mapsto L_{q,\infty}} \lesssim A \sup_{\|f\|_{L_{p,1}}=\|g\|_{L_{q'}}=1} \| f*g\|_{L_{\tau',1}}.
		\end{equation*}
		Since
		\begin{equation*}
			1 +\frac{1}{\tau'} = 1 + 1 -\frac{1}{\tau} = 1 +\frac{1}{r} = \frac{1}{p} + \frac{1}{q'},
		\end{equation*}
		the O'Neil inequality gives
		\begin{equation}\label{p1 qinfty}
				\|T_\lambda\|_{L_{p,1} \mapsto L_{q,\infty}} \lesssim A\sup_{\|f\|_{L_{p,1}}=\|g\|_{L_{q'}}=1} \|f\|_{L_{p,1}} \|g\|_{L_{q',\infty}}\lesssim A.
		\end{equation}
		
		Let us pick $p_0, p_1$ such that $1<p_0 < p <p_1<\infty$ and choose $q_0$, $q_1$ so that
		\begin{equation}\label{pq0 pq1 r}
			\frac{1}{p_0} - \frac{1}{q_0} = \frac{1}{p_1} - \frac{1}{q_1} = \frac{1}{p} - \frac{1}{q}.
		\end{equation}
		Then, by \eqref{p1 qinfty}, we know that 
		\begin{equation*}
			\|T_\lambda\|_{L_{p_j,1} \mapsto L_{q_j,\infty}} \lesssim A, \qquad \text{for } j=0,1.
		\end{equation*}
		Since $p_0 <p_1$, there exists $0<\theta<1$ such that
		\begin{equation*}
			\frac{1}{p} = \frac{1-\theta}{p_0} + \frac{\theta}{p_1},
		\end{equation*}
		and hence, the relation \eqref{pq0 pq1 r} gives
		\begin{equation*}
			\frac{1}{q} = \frac{1-\theta}{q_0} + \frac{\theta}{q_1}.
		\end{equation*}
		Therefore, from the Marcinkiewicz-Calderon's interpolation theorem, it follows that $$\|\lambda\|_{m_p^q} = \|T_\lambda\|_{L_p\mapsto L_q} \lesssim A.$$ 
	\end{proof}
		
	\section{Necessary conditions}\label{sec_nec_cond}
	In this section, we derive sufficient condition for $L_p-L_q$ boundedness for Fourier multipliers. First, we obtain this for Fourier transform multipliers:
		\begin{theorem}\label{Nes condition transform}
			Let $1< p \leq 2 \leq q <\infty$ and $1/r = 1/p - 1/q$. Let $0<\tau\leq \infty$ and $M$ be the set of all finite intervals in $\mathbb{R}$. Then, for a measurable function $\lambda$, it follows
			\begin{equation*}
			\sup_{e\in M} \frac{1}{|e|^{1/r'}} \left| \int_{e} \lambda(\xi) d\xi \right| \lesssim \|T_\lambda\|_{L_p\mapsto L_{q,\tau}}.
			\end{equation*}
		\end{theorem}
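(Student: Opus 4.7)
The strategy is to reduce to the case $\tau=\infty$ by the nesting of Lorentz spaces and then to test the multiplier against the explicit function $\mathcal{F}^{-1}(\chi_e)$. Indeed, $L_{q,\tau}(\mathbb{R}) \hookrightarrow L_{q,\infty}(\mathbb{R})$ for every $0<\tau\leq\infty$ (the Lorentz special case of Proposition \ref{inclusion}), so $\|T_\lambda\|_{L_p\to L_{q,\infty}} \lesssim \|T_\lambda\|_{L_p\to L_{q,\tau}}$, and it suffices to prove the bound with $L_{q,\infty}$ on the right. Fix $e\in M$ and put $f=g=\mathcal{F}^{-1}(\chi_e)$; since $\chi_e\in L_2(\mathbb{R})$, both $f,g\in L_2(\mathbb{R})$, and Parseval's identity yields
\[
\int_e \lambda(\xi)\,d\xi \;=\; \int_{\mathbb{R}} \lambda\,\mathcal{F}f\,\overline{\mathcal{F}g}\,d\xi \;=\; \int_{\mathbb{R}} T_\lambda f(x)\,\overline{g(x)}\,dx.
\]
The Lorentz H\"older inequality $\left|\int fg\right|\lesssim \|f\|_{L_{q,\infty}}\|g\|_{L_{q',1}}$ (valid since $1<q<\infty$) together with the operator bound then gives
\[
\left|\int_e \lambda\right| \;\lesssim\; \|T_\lambda f\|_{L_{q,\infty}}\|g\|_{L_{q',1}} \;\leq\; \|T_\lambda\|_{L_p\to L_{q,\infty}}\|f\|_{L_p}\|g\|_{L_{q',1}}.
\]

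Writing $e=[a,b]$, $c=(a+b)/2$, and $\ell=|e|$, a direct computation gives $f(x) = \frac{\ell}{\sqrt{2\pi}}\,e^{icx}\,\mathrm{sinc}(\ell x/2)$ with $\mathrm{sinc}(u)=\sin(u)/u$. A dilation change of variables then yields $\|f\|_{L_p}\approx \ell^{1/p'}\|\mathrm{sinc}\|_{L_p}$ and $\|g\|_{L_{q',1}}\approx \ell^{1/q}\|\mathrm{sinc}\|_{L_{q',1}}$; the hypotheses $p>1$ and $q<\infty$ (equivalently $q'>1$) guarantee that $\mathrm{sinc}\in L_p\cap L_{q',1}$. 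Combined with the identity $1/p'+1/q = 1-1/r = 1/r'$, this produces
\[
\left|\int_e\lambda\right| \;\lesssim\; \|T_\lambda\|_{L_p\to L_{q,\infty}}\,|e|^{1/r'},
\]
and taking the supremum over $e\in M$ finishes the proof.

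The main subtlety is the Lorentz-norm scaling for the sinc function: the finiteness of $\|\mathrm{sinc}\|_{L_{q',1}}$ at infinity requires $q'>1$, which is exactly where the condition $q<\infty$ enters essentially, while $\mathrm{sinc}\in L_p$ at infinity uses $p>1$. Both ingredients combine cleanly because the extraction of the integral $\int_e\lambda$ via Parseval exactly matches the scaling exponent $|e|^{1/p'+1/q}=|e|^{1/r'}$.
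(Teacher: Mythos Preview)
Your argument is correct. Both proofs test against the same function $f=\mathcal{F}^{-1}\chi_e$ and end with the same scaling identity $1/p'+1/q=1/r'$, but the mechanisms for extracting $\int_e\lambda$ and for computing $\|f\|_{L_p}$ differ. The paper bounds $\|T_\lambda f\|_{L_{q,\tau}}$ from below by the NET-space norm $\|\lambda\chi_e\|_{N_{q',\tau}(M)}$ via Theorem~\ref{nursultanovs inequality} (a Hausdorff--Young-type inequality into $N_{p',q}$), then passes to $N_{q',\infty}$ and reads off $|e|^{-1/q}\bigl|\int_e\lambda\bigr|$ from Proposition~\ref{equiv norm}; for $\|f\|_{L_p}$ it quotes Sagher's theorem $\|\mathcal{F}^{-1}\chi_e\|_{L_p}\approx\|\chi_e\|_{L_{p',p}}$. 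You instead recover $\int_e\lambda$ by Parseval and the Lorentz H\"older pairing $L_{q,\infty}\times L_{q',1}$, and compute both $\|f\|_{L_p}$ and $\|f\|_{L_{q',1}}$ directly by the sinc scaling. Your route is more self-contained --- it needs neither the NET-space machinery nor Sagher's result --- while the paper's route illustrates how its own tools (Theorem~\ref{nursultanovs inequality}) yield the necessary condition with no additional calculation. Both arguments use only $1<p,q<\infty$ at this stage; the restriction $p\le 2\le q$ is not essential for this lower bound.
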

		\begin{proof}
		Let $e_0$ be an arbitrary interval, that is $e_0\in M$. We choose $f$ such that $\mathcal{F}f = \chi_{e_0}$, where $\chi_{e_0}$ is the indicator function of $e_0$. By Theorem \ref{nursultanovs inequality}, we obtain 
		\begin{equation}\label{nach lb}
		\|T_{\lambda}f\|_{L_{q,\tau}(\mathbb{R})} = \|\mathcal{F}^{-1}\lambda\mathcal{F}
		f\|_{L_{q,\tau}(\mathbb{R})}\gtrsim \|\lambda\mathcal{F}
		f\|_{N_{q',\tau}(M)} \gtrsim \|\lambda\mathcal{F}
		f\|_{N_{q',\infty}(M)},
		\end{equation}
		where $N_{p,q}(M) = N_{p,q}(\mathbb{R}, M)$; see Definition \ref{NET space}. By Proposition \ref{equiv norm}, we obtain
		\begin{multline*}
			\|\lambda\mathcal{F}f\|_{N_{q',\infty}(M)} = \sup_{e\in M} \frac{1}{|e|^{1/q}}\left|\int_e \lambda(\xi) \mathcal{F}f(\xi) d\xi\right| \geq   \frac{1}{|e_0|^{1/q}}\left|\int_{e_0} \lambda(\xi) \mathcal{F}f(\xi) d\xi\right|  \\
			= \frac{1}{|e_0|^{1/q}}\left|\int_{e_0} \lambda(\xi) d\xi\right|,
		\end{multline*}
		so that
		\begin{equation}\label{Tlfb}
		\|T_{\lambda}f\|_{L_{q,\tau}(\mathbb{R})}\gtrsim \frac{1}{|e_0|^{1/q}}\left|\int_{e_0} \lambda(\xi) d\xi\right|.
		\end{equation}
		Since $\chi_{e_0}$ is a monotone even function (modulo shifting), by Theorem 2.2 in \cite{Sagher1976}, we obtain
		\begin{equation*}
		\|f\|_{L_p(\mathbb{R})} \approx \|\chi_{e_0}\|_{L_{p',p}(\mathbb{R})} = \left(\int_{0}^{\infty} \left(t^{\frac{1}{p'}}\chi_{e_0}^*(t)\right)^{p}\frac{dt}{t} \right)^{\frac{1}{p}} = \left(\int_{0}^{|e_0|} t^{\frac{p}{p'}-1}dt\right)^{\frac{1}{p}} = |e_0|^{\frac{1}{p'}}.
		\end{equation*}
		Therefore, \eqref{Tlfb} implies
		\begin{equation*}
			\frac{1}{|e_0|^{1/r'}} \left| \int_{e_0} \lambda(\xi) d\xi \right| \lesssim  \|T_\lambda\|_{L_p\mapsto L_{q,\tau}}.
		\end{equation*}
		Recalling that this is true for an arbitrary $e_0\in M$, we complete the proof.
	\end{proof}
	Now, we prove similar result, but for Fourier series multipliers:

\begin{theorem}\label{lower bound 2}
	Let $1< p \leq 2 \leq q <\infty$ and $1/r = 1/p - 1/q$. Let $0<\tau\leq \infty$ and $W$ be the set of all finite intervals in $\mathbb{Z}$. Then, for any sequence of complex numbers $\lambda = \{\lambda_k\}_{k\in \mathbb{Z}}$, it follows
	\begin{equation*}
		\sup_{e\in W} \frac{1}{|e|^{1/r'}} \left|  \sum_{k\in e} \lambda_k \right| \lesssim \|T_\lambda\|_{L_p(0,1) \mapsto L_{q,\tau}(0,1)},
	\end{equation*}
\end{theorem}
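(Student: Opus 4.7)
The plan is to mirror the proof of Theorem \ref{Nes condition transform} using the Fourier series versions of the key tools (Theorem \ref{nursultanovs inequality}(ii), Proposition \ref{equiv norm}, and Proposition \ref{inclusion}). Fix an arbitrary finite interval $e_0 \in W$ with $|e_0| = m$, and take as test function the trigonometric polynomial
\begin{equation*}
f(x) = \sum_{k \in e_0} e^{2\pi i k x},
\end{equation*}
whose Fourier coefficients are precisely $a_k = \chi_{e_0}(k)$. Then $T_\lambda f$ has Fourier coefficients $\lambda_k \chi_{e_0}(k)$.

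First I would apply Theorem \ref{nursultanovs inequality}(ii) with the role of its $p$ played by our $q \geq 2$ and its $q$ played by $\tau$, which gives
\begin{equation*}
\|T_\lambda f\|_{L_{q,\tau}(0,1)} \gtrsim \|\lambda \chi_{e_0}\|_{n_{q',\tau}(W)} \gtrsim \|\lambda \chi_{e_0}\|_{n_{q',\infty}(W)},
\end{equation*}
where the second inequality is the nestedness from Proposition \ref{inclusion}(ii). By Proposition \ref{equiv norm}, taking $e = e_0$ in the supremum yields
\begin{equation*}
\|\lambda \chi_{e_0}\|_{n_{q',\infty}(W)} \geq \frac{1}{|e_0|^{1/q}} \left| \sum_{k \in e_0} \lambda_k \right|.
\end{equation*}
Combining with $\|T_\lambda f\|_{L_{q,\tau}} \leq \|T_\lambda\|_{L_p \to L_{q,\tau}} \|f\|_{L_p(0,1)}$ and the exponent identity $1/q + 1/p' = 1 - 1/r = 1/r'$, taking the supremum over $e_0 \in W$ at the end will close the argument, provided we have the right upper bound on $\|f\|_{L_p(0,1)}$.

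The only genuinely new ingredient — the main (and only modest) obstacle — is the estimate $\|f\|_{L_p(0,1)} \lesssim m^{1/p'}$, replacing the Sagher-theorem computation used in the transform case. If $e_0 = \{N, N+1, \dots, N+m-1\}$ then $f(x) = e^{2\pi i N x} D_m(x)$, where $D_m$ is the Dirichlet-type kernel with $|D_m(x)| = |\sin(\pi m x)/\sin(\pi x)|$. Using the pointwise bound $|D_m(x)| \lesssim \min(m, 1/|x|)$ on $[-1/2,1/2]$ and splitting the integral at $|x| = 1/m$ gives
\begin{equation*}
\|f\|_{L_p(0,1)}^p \lesssim \int_0^{1/m} m^p \, dx + \int_{1/m}^{1/2} x^{-p}\, dx \lesssim m^{p-1},
\end{equation*}
which is valid for $1 < p < \infty$ and yields $\|f\|_{L_p(0,1)} \lesssim m^{1/p'} = |e_0|^{1/p'}$. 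For a general discrete interval $e_0$, translation invariance in the spectral parameter reduces us to this case since $|f| = |D_m|$. Plugging everything together then produces
\begin{equation*}
\frac{1}{|e_0|^{1/r'}} \left| \sum_{k \in e_0} \lambda_k \right| \lesssim \|T_\lambda\|_{L_p(0,1) \to L_{q,\tau}(0,1)},
\end{equation*}
and taking $\sup$ over $e_0 \in W$ finishes the proof.
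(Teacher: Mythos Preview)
Your proof is correct and follows essentially the same route as the paper: the same test function, the same chain Theorem \ref{nursultanovs inequality}(ii) $\to$ Proposition \ref{inclusion}(ii) $\to$ Proposition \ref{equiv norm}, and the same exponent bookkeeping. The only difference is in estimating $\|f\|_{L_p(0,1)}$: the paper invokes Sagher's theorem \cite{Sagher} to obtain the two-sided equivalence $\|f\|_{L_p}\approx\|a\|_{l_{p',p}}\approx|e_0|^{1/p'}$, whereas you compute the one-sided bound $\|f\|_{L_p}\lesssim|e_0|^{1/p'}$ directly via the Dirichlet kernel. Your route is more elementary and self-contained, and the upper bound is all that is needed here.
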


\begin{proof}
	Let $e_0$ be an arbitrary interval on $\mathbb{Z}$, that is $e_0\in W$. Then we choose $f$ with $f\sim\sum_{k\in\mathbb{Z}}a_ke^{2\pi i k x}$ such that
	\begin{equation*}
	a_k=
	\begin{cases}
	1 & \text{for } k \in e_0,\\
	0 & \text{for } k \notin e_0.
	\end{cases}
	\end{equation*}
	By Theorems \ref{nursultanovs inequality} and \ref{inclusion}, we estimate
	\begin{align*}
		\|T_{\lambda}f\|_{L_{q,\tau}(0,1)} \gtrsim \|\lambda a\|_{n_{q',\tau}(W)} 
		\gtrsim \|\lambda a\|_{n_{q',\infty }(W)},
	\end{align*}
	where $n_{q',\tau}(W) = n_{q',\tau}\left([0,1],W\right)$ and $n_{q',\infty}(W) = n_{q',\tau}\left([0,1],W\right)$; see Definition \ref{NET space}.
	From Proposition \ref{equiv norm}, it follows
	\begin{equation*}
		\|\lambda a\|_{n_{q',\infty }(W)} = \sup_{e\in W} \frac{1}{|e|^{1/q}}\left|\sum_{k\in e}\lambda_ka_k\right| \geq  \frac{1}{|e_0|^{1/q}}\left|\sum_{k\in e_0}\lambda_ka_k\right|. 
	\end{equation*}
	Recalling the choice of the sequence $a = \{a_k\}_{k\in \mathbb{Z}}$, we derive that
	\begin{equation}\label{Tlfse}
	\|T_{\lambda} f\|_{L_{q,\tau}}\geq \frac{1}{|e_0|^{1/q}}\left|\sum_{k\in e_0}\lambda_k\right|.
	\end{equation}
	Since $a$ is non-increasing and vanishing at infinity, Theorem 4 in \cite{Sagher} gives
	\begin{align*}
	\|f\|_{L_p(0,1)} \approx \|a\|_{l_{p',p}(\mathbb{Z})} = \left( \sum_{k=0}^{|e_0|} \left(k^{1/p'}a^*_k\right)^p \frac{1}{k} \right)^{1/p} \approx \left(\sum_{k=0}^{|e_0|} k^{p-2}\right)^{1/p} \approx |e_0|^{1/p'}.
	\end{align*}
	This and \eqref{Tlfse} give 
	\begin{align*}
	\|T_\lambda\|_{L_p \mapsto L_{q,\tau}} \gtrsim \frac{\|T_{\lambda} f\|_{L_{q,\tau}(0,1)}}{\|f\|_{L_p(0,1)}} \gtrsim \frac{\frac{1}{|e_0|^{1/q}}\left|\sum_{m\in e_0}\lambda_m\right|}{|e_0|^{1/p'}} = \frac{1}{|e_0|^{1/r'}}\left|\sum_{m\in e_0}\lambda_m\right|.
	\end{align*}
Since, $e_0$ was an arbitrary interval, this finishes the proof.
\end{proof}

\begin{remark}
	In case $\tau = q$, Theorem \ref{lower bound 2} was obtained in \cite{NursultanovTleukhanova1999}. While Theorem \ref{Nes condition transform} was obtained only for the case of non-negative symbols, see \cite{Nursultanov1997}.
\end{remark}

\section{Criteria for the $L_p-L_q$ boundedness}\label{sec_criteria}
In this section, we introduce the notion of $M$-generalized monotone functions and sequences. For the corresponding Fourier multipliers, we obtain criteria for $L_p\rightarrow L_q$ boundedness.
\begin{definition}
	Let $M$ be a set of all finite intervals on $\mathbb{R}$.  We say that $f:\mathbb{R} \mapsto \mathbb{C}$ is a $M$-generalized monotone function if 
	\begin{equation*}
		f^*(t) \leq C \bar{f}(t,M)
	\end{equation*}
holds for some $C>0$ depending on $f$.

Let $W$ be a set of all finite intervals in $\mathbb{Z}$. We say that a sequence of complex numbers $\{a_k\}_{k\in \mathbb{Z}}$ is $M$-generalized monotone if 
\begin{equation*}
	a^*_k \leq C \bar{a}_k(W).
\end{equation*}
\end{definition}
This is the simplified version of definition needed for the purpose of this work. For a more general setting we define it as follows:
\begin{definition}
	Let $(\Omega, \mu)$ be a measurable space and $M$ be a set of measurable subsets of $\Omega$ with finite positive measures. We say that $f:\Omega \mapsto \mathbb{C}$ is a $M$-generalized monotone function if
	\begin{equation*}
			f^*(t) \leq C \bar{f}(t,M)
	\end{equation*}
holds for some $C>0$ depending on $f$.
\end{definition}

\begin{theorem}\label{Criteria}
	Let $M$ be a set of all finite intervals on $\mathbb{R}$, $1< p \leq 2 \leq q <\infty$, and $1/r = 1/p - 1/q$. Then a $M$-generalized monotone function $\lambda:\mathbb{R}\mapsto\mathbb{C}$ belongs to $M_p^q$ if and only if  
	\begin{equation}\label{Cr1}
		\sup_{e \in M} \frac{1}{|e|^{1/r'}} \left| \int_{e} \lambda(\xi) d\xi \right| < \infty.
	\end{equation}
\end{theorem}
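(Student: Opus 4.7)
The plan is to split the proof into necessity and sufficiency and exploit the fact that for $M$-generalized monotone functions the new necessary condition \eqref{Cr1} is, after all, equivalent to a classical $L_{r,\infty}$ bound, so one can close the loop using results already proved in the excerpt.

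For necessity, no monotonicity is needed at all. Assuming $\lambda \in M_p^q$, I would invoke Theorem \ref{Nes condition transform} with $\tau = q$ (say) applied to the family $M$ of all finite intervals; it yields directly
\begin{equation*}
\sup_{e\in M}\frac{1}{|e|^{1/r'}}\left|\int_e \lambda(\xi)\,d\xi\right| \lesssim \|T_\lambda\|_{L_p \to L_q} < \infty,
\end{equation*}
which is exactly \eqref{Cr1}.

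For sufficiency, the main idea is to translate \eqref{Cr1} into $\lambda \in L_{r,\infty}(\mathbb{R})$. Set $A := \sup_{e\in M}|e|^{-1/r'}\bigl|\int_e \lambda\bigr|$. By the $M$-generalized monotonicity hypothesis, for every $t>0$,
\begin{equation*}
\lambda^*(t) \leq C \sup_{|e|\geq t,\, e\in M} \frac{1}{|e|}\left|\int_e \lambda(\xi)\,d\xi\right|.
\end{equation*}
Multiplying by $t^{1/r}$ and using that $|e|\geq t$ forces $t^{1/r}/|e| \leq |e|^{1/r}/|e| = |e|^{-1/r'}$, I obtain
\begin{equation*}
t^{1/r}\lambda^*(t) \leq C \sup_{|e|\geq t,\, e\in M}\frac{1}{|e|^{1/r'}}\left|\int_e \lambda(\xi)\,d\xi\right| \leq C A,
\end{equation*}
so $\|\lambda\|_{L_{r,\infty}(\mathbb{R})} \lesssim A$. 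Since restricting a function does not increase its decreasing rearrangement, $\|\lambda\|_{L_{r,\infty}(\Delta_k)} \leq \|\lambda\|_{L_{r,\infty}(\mathbb{R})}$ uniformly in $k$. Theorem \ref{sufHtr} then gives $\|T_\lambda\|_{L_p\to L_q} \lesssim \sup_k \|\lambda\|_{L_{r,\infty}(\Delta_k)} \lesssim A$, proving that $\lambda \in M_p^q$.

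No serious obstacle appears; the whole argument is essentially the elementary bookkeeping inequality $t^{1/r}/|e| \leq |e|^{-1/r'}$ valid whenever $|e|\geq t$. This is exactly what makes the definition of $M$-generalized monotonicity interface cleanly with the target Lorentz norm, so the criterion reduces to combining the necessary condition of Section \ref{sec_nec_cond} with the H\"ormander-type sufficient condition of Section \ref{sec_suf_cond}.
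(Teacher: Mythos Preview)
Your proof is correct and follows essentially the same strategy as the paper: necessity via Theorem~\ref{Nes condition transform}, sufficiency by combining the $M$-generalized monotonicity with the key inequality $t^{1/r}/|e|\le |e|^{-1/r'}$ for $|e|\ge t$, and then invoking Theorem~\ref{sufHtr}. The only cosmetic difference is that the paper integrates the resulting pointwise bound $\lambda^*(t)\lesssim A\,t^{-1/r}$ to control the upper bound in Theorem~\ref{HT}(i) directly, whereas you recognize that this bound is exactly $\|\lambda\|_{L_{r,\infty}(\mathbb{R})}\lesssim A$ and appeal to Theorem~\ref{sufHtr} straightaway; your packaging is slightly more economical but the underlying computation is identical.
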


\begin{proof}
	If $\lambda\in M_p^q$, then Theorem \ref{Nes condition transform} gives \eqref{Cr1}. To prove the converse, it suffices to show that the upper bound in Theorem \ref{HT}(i) is finite. To do this, we estimate 
	\begin{align*}
		\sup_{k\in \mathbb{Z}} \sup_{e \subset \Delta_k} \frac{1}{|e|^{1/r'}} \left| \int_{e} \lambda(\xi) d\xi \right| &\leq \sup_{k\in \mathbb{Z}} \sup_{e \subset \Delta_k} \frac{1}{|e|^{1/r'}}  \int_{e} \left|\lambda(\xi)\right| d\xi \lesssim \sup_{k\in \mathbb{Z}} \sup_{e \subset \Delta_k} \frac{1}{|e|^{1/r'}} \int_{0}^{|e|} \lambda^*(t)  dt.
	\end{align*}
Therefore, since $\lambda$ is a $M$-generalized monotone function, we obtain 
\begin{align*}
	\sup_{k\in \mathbb{Z}} \sup_{e \subset \Delta_k} \frac{1}{|e|^{1/r'}} \left| \int_{e} \lambda(\xi) d\xi \right| &\lesssim \sup_{k\in \mathbb{Z}} \sup_{e \subset \Delta_k} \frac{1}{|e|^{1/r'}} \int_{0}^{|e|} \bar{\lambda}(t,M)  dt\\
	& \lesssim \sup_{k\in \mathbb{Z}} \sup_{e \subset \Delta_k} \frac{1}{|e|^{1/r'}} \int_{0}^{|e|} \sup_{e'\in M, |e'|\geq t} \frac{1}{|e'|^{1 - 1/r'}} \frac{1}{|e'|^{1/r'}} \left| \int_{e'} \lambda(\xi)d\xi\right| dt\\
	& \lesssim \sup_{k\in \mathbb{Z}} \sup_{e \subset \Delta_k} \frac{1}{|e|^{1/r'}} \int_{0}^{|e|} \frac{1}{t^{1 - 1/r'}} dt \sup_{e'\in M} \frac{1}{|e'|^{1/r'}} \left| \int_{e'} \lambda(\xi)d\xi\right| \\
	&\lesssim \sup_{e\in M} \frac{1}{|e|^{1/r'}} \left| \int_{e} \lambda(\xi)d\xi\right| \\
	& <\infty.
\end{align*}
Then, by Theorem \ref{Suf condition 1(i)}, it follows that $\lambda\in M_p^q$.
\end{proof}

Similar result holds for Fourier series multipliers:
\begin{theorem}\label{Criteria for series}
	Let $W$ be a set of all finite intervals on $\mathbb{Z}$, $1< p \leq 2 \leq q <\infty$, and $1/r = 1/p - 1/q$. Then a $M$-generalized monotone sequence $\lambda = \{\lambda_k\}_{k\in \mathbb{Z}}$ belongs to $m_p^q$ if and only if
	\begin{equation}\label{Crit ser}
		\sup_{e\in W} \frac{1}{|e|^{1/r'}} \left| \sum_{j\in e} a_j\right| < \infty.
	\end{equation}
\end{theorem}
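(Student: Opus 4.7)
The plan is to mirror the proof of Theorem \ref{Criteria} essentially verbatim, replacing integrals by sums and functions by sequences. The structure has two halves corresponding to the two implications.

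For the necessity direction, if $\lambda \in m_p^q$ then $\|T_\lambda\|_{L_p \to L_{q,q}} = \|T_\lambda\|_{L_p \to L_q} < \infty$ (since $L_{q,q} = L_q$), so Theorem \ref{lower bound 2} applied with $\tau = q$ directly delivers \eqref{Crit ser}. This half is immediate.

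For the sufficiency direction, the strategy is to combine Theorem \ref{Suf condition 1(i)} with the $M$-generalized monotone hypothesis. It suffices to show the sufficient-condition quantity $\sup_{k \in \mathbb{N}_0} \|\lambda\|_{l_{r,\infty}(\delta_k)}$ is finite. By Proposition \ref{equiv norm} and the identification of $l_{r,\infty}$ with $n_{r,\infty}$ when the family consists of all non-empty finite subsets,
\[
\|\lambda\|_{l_{r,\infty}(\delta_k)} \approx \sup_{\emptyset \neq e \subset \delta_k} \frac{1}{|e|^{1/r'}} \left| \sum_{j \in e} \lambda_j \right|.
\]
So it is enough to bound the right-hand side uniformly in $k$ and $e$.

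The core estimate proceeds as follows. For any finite non-empty $e \subset \mathbb{Z}$, we first pass from the signed sum to the rearrangement on the ambient space:
\[
\left| \sum_{m \in e} \lambda_m \right| \le \sum_{m \in e} |\lambda_m| = \sum_{m=1}^{|e|} (\lambda\chi_e)^*_m \le \sum_{m=1}^{|e|} \lambda^*_m,
\]
since truncating to $e$ only decreases the rearrangement. Invoking the $M$-generalized monotone property $\lambda^*_m \le C\,\bar{\lambda}_m(W)$ gives
\[
\left| \sum_{m \in e} \lambda_m \right| \lesssim \sum_{m=1}^{|e|} \bar{\lambda}_m(W).
\]
Next, factoring out the hypothesis \eqref{Crit ser}: for each $m$,
\[
\bar{\lambda}_m(W) = \sup_{|e'|\ge m,\, e'\in W} \frac{1}{|e'|^{1/r}}\cdot\frac{1}{|e'|^{1/r'}} \left|\sum_{l\in e'}\lambda_l\right| \le \frac{1}{m^{1/r}} \sup_{e'\in W} \frac{1}{|e'|^{1/r'}} \left|\sum_{l\in e'}\lambda_l\right|,
\]
because $|e'| \ge m$ and the weight $|e'|^{-1/r}$ is non-increasing. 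Summing,
\[
\frac{1}{|e|^{1/r'}}\sum_{m=1}^{|e|} \bar{\lambda}_m(W) \le \frac{1}{|e|^{1/r'}}\Bigl(\sum_{m=1}^{|e|} m^{-1/r}\Bigr) \sup_{e'\in W} \frac{1}{|e'|^{1/r'}} \left|\sum_{l\in e'}\lambda_l\right| \lesssim \sup_{e'\in W} \frac{1}{|e'|^{1/r'}} \left|\sum_{l\in e'}\lambda_l\right|,
\]
since $\sum_{m=1}^{|e|} m^{-1/r} \approx |e|^{1 - 1/r} = |e|^{1/r'}$ for $r > 1$, with the degenerate case $r = \infty$ (i.e.\ $p = q = 2$, $r' = 1$) giving equality. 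Taking sup over $e$ and $k$ and applying Theorem \ref{Suf condition 1(i)} concludes that $\lambda \in m_p^q$.

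No step poses a genuine obstacle; the main thing to be careful about is the identification of $\|\cdot\|_{l_{r,\infty}(\delta_k)}$ with a supremum of sums over arbitrary finite subsets (not just intervals) of $\delta_k$, and the edge case $r = \infty$ in the weighted sum $\sum m^{-1/r}$, both of which are handled automatically by the formulas above.
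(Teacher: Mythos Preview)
Your proof is correct and follows essentially the same route as the paper's own argument: necessity from Theorem~\ref{lower bound 2} with $\tau=q$, and sufficiency by bounding the quantity $\sup_{k}\sup_{e\subset\delta_k}|e|^{-1/r'}\bigl|\sum_{m\in e}\lambda_m\bigr|$ via $\sum_{m=1}^{|e|}\lambda_m^*\lesssim\sum_{m=1}^{|e|}\bar\lambda_m(W)$ and then factoring out the hypothesis using $\bar\lambda_m(W)\le m^{-1/r}\sup_{e'\in W}|e'|^{-1/r'}\bigl|\sum_{l\in e'}\lambda_l\bigr|$. Your presentation is in fact slightly more careful than the paper's, making explicit the identification $\|\lambda\|_{l_{r,\infty}(\delta_k)}\approx\sup_{e\subset\delta_k}|e|^{-1/r'}\bigl|\sum\lambda_j\bigr|$ and the handling of the endpoint $r=\infty$.
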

\begin{proof}
	Due to Theorems \ref{HT} and \ref{lower bound 2}, it is suffices to prove that the upper bound in Theorem \ref{HT} (ii) is finite if \eqref{Crit ser} holds. We estimate
	\begin{align*}
		\sup_{k\in \mathbb{Z}} \sup_{e \subset \delta_k} \frac{1}{|e|^{1/r'}} \left|  \sum_{m\in e} \lambda_m \right| \leq \sup_{k\in \mathbb{Z}} \sup_{e \subset \delta_k} \frac{1}{|e|^{1/r'}}   \sum_{j=1}^{|e|} \lambda_m^*
	\end{align*}
Since $\lambda$ is a $M$-generalized monotone sequence, we obtain
\begin{align*}
	\sup_{k\in \mathbb{Z}} \sup_{e \subset \delta_k} \frac{1}{|e|^{1/r'}} \left|  \sum_{m\in e} \lambda_m \right| &\lesssim \sup_{k\in \mathbb{Z}} \sup_{e \subset \delta_k} \frac{1}{|e|^{1/r'}}  \sum_{j=1}^{|e|} \sup_{e_0\in W, |e_0|\geq j} \frac{1}{|e_0|} \left| \sum_{j\in e_0} \lambda_j \right|\lesssim \sup_{e\in W} \frac{1}{|e|^{1/r'}} \left|  \sum_{m\in e} \lambda_m \right| .\\
\end{align*}
This completes the proof.
\end{proof}

\section{Examples and corollaries}\label{examandcor}
In the final section, as a corollary, we will prove Theorem \ref{HT}. We will demonstrate that our results are strictly stronger thatn H\"ormander's and Lizorkin's multiplier theorems.
\begin{proof}[Proof of Theorem \ref{HT}]
	The upper bounds in Theorem \ref{HT} follows from Theorems \ref{sufHtr} and \ref{Suf condition 1(i)}. Since $M_k\subset M$ and $W_k\subset W$, by choosing $\tau = q$ in Theorems \ref{Nes condition transform} and \ref{lower bound 2}, we obtain the lower bounds.
\end{proof}

Next, we obtain the following known result.
\begin{corollary}
	\begin{enumerate}[label=(\roman*)]
		\item Let $\lambda$ be a measurable function on $\mathbb{R}$, then 
		\begin{equation*}
			\|\lambda\|_{M_2^2} \approx \|\lambda\|_{L_{\infty}(\mathbb{R})},
		\end{equation*}
	that is $\lambda \in M_2^2$ if and only if $\lambda\in L_\infty(\mathbb{R})$.
	\item Let $\lambda$ be a sequence of complex numbers, then 
	\begin{equation*}
		\|\lambda\|_{m_2^2} \approx \|\lambda\|_{l_{\infty}(\mathbb{Z})},
	\end{equation*}
	that is $\lambda\in m_2^2$ if and only if $\lambda\in l_\infty(\mathbb{Z})$.
	\end{enumerate}
\end{corollary}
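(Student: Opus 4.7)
The plan is to specialize Theorem \ref{HT} to $p=q=2$, where $1/r = 0$, hence $r=\infty$ and $r' = 1$, so the sandwich in Theorem \ref{HT} becomes a two-sided bound by suprema of unnormalized averages (i.e.\ $|e|^{-1}$) of $\lambda$ over intervals contained in the Littlewood--Paley rings $\Delta_k$ (resp.\ discrete rings $\delta_k$). It then remains to identify the supremum of these averages with the $L_\infty$ norm (resp.\ $l_\infty$ norm).

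For the Fourier transform case, the upper bound is immediate: for any interval $e \subset \Delta_k$ one has $|e|^{-1}|\int_e \lambda(\xi)\,d\xi| \leq \|\lambda\|_{L_\infty(\Delta_k)} \leq \|\lambda\|_{L_\infty(\mathbb{R})}$, and taking the double supremum yields $\|T_\lambda\|_{L_2 \to L_2} \lesssim \|\lambda\|_{L_\infty(\mathbb{R})}$ from the right-hand inequality of Theorem \ref{HT}(i). For the converse, I would invoke the Lebesgue differentiation theorem: for almost every $x \in \Delta_k$, intervals $e \in M_k$ shrinking to $x$ satisfy $|e|^{-1}\int_e \lambda(\xi)\,d\xi \to \lambda(x)$, so $\sup_{e \in M_k} |e|^{-1}|\int_e \lambda(\xi)\,d\xi| \geq |\lambda(x)|$ a.e.\ on $\Delta_k$. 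Since $\bigcup_{k\in\mathbb{Z}} \Delta_k = \mathbb{R} \setminus \{0\}$, the double supremum is at least $\|\lambda\|_{L_\infty(\mathbb{R})}$, and the left-hand inequality of Theorem \ref{HT}(i) gives $\|\lambda\|_{L_\infty(\mathbb{R})} \lesssim \|T_\lambda\|_{L_2 \to L_2}$.

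The discrete analogue is even simpler. The upper bound follows from $|e|^{-1}|\sum_{m \in e} \lambda_m| \leq \max_{m \in e} |\lambda_m| \leq \|\lambda\|_{l_\infty(\mathbb{Z})}$ for any discrete interval $e \subset \delta_k$, combined with the right-hand inequality of Theorem \ref{HT}(ii). For the lower bound, I would apply the left-hand inequality of Theorem \ref{HT}(ii) to singleton intervals $e = \{m\} \in W_k$, which gives $|\lambda_m| \lesssim \|T_\lambda\|_{L_2 \to L_2}$ directly for every $m$, since $\bigcup_{k \in \mathbb{N}_0} \delta_k = \mathbb{Z}$.

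There is no genuine obstacle here; the argument is essentially a direct specialization of the already-proved Theorem \ref{HT} plus the Lebesgue differentiation theorem in the continuous case. The only thing to watch is the null set $\{0\}$ in the continuous statement, which is irrelevant for the $L_\infty$ norm.
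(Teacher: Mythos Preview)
Your argument is correct and essentially coincides with the paper's approach. The paper cites Theorems~\ref{sufHtr} and~\ref{Nes condition transform} (resp.\ Theorems~\ref{Suf condition 1(i)} and~\ref{lower bound 2}) directly rather than routing through the summary Theorem~\ref{HT}, which makes the upper bound slightly more immediate since Theorem~\ref{sufHtr} already reads $\|\lambda\|_{M_2^2}\lesssim\sup_k\|\lambda\|_{L_\infty(\Delta_k)}=\|\lambda\|_{L_\infty(\mathbb{R})}$ without passing through averages; for the lower bound both routes need the Lebesgue differentiation step you spelled out, which the paper leaves implicit.
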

\begin{proof}
	The first part follows from Theorems \ref{sufHtr} and \ref{Nes condition transform}, while the second one follows from Theorems \ref{Suf condition 1(i)} and \ref{lower bound 2}.
\end{proof}

For the Fourier series multipliers, we also have the following result:

\begin{corollary}\label{tau to tau}
	Let $1<\tau <\infty$, then
	\begin{equation}\label{tau to tau 2}
		\|\lambda\|_{m_\tau^\tau} \lesssim \sup_{k\in \mathbb{N}_0} \|\lambda\|_{l_{\frac{2\tau}{|2 - \tau|},\infty}(\delta_k)}
	\end{equation}
	for a sequence of complex numbers $\{\lambda_k\}_{k\in \mathbb{Z}}$.
\end{corollary}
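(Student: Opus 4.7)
The plan is to derive the $L_\tau \to L_\tau$ bound from the H\"ormander type result, Theorem~\ref{Suf condition 1(i)}, by exploiting the fact that on the finite measure space $(0,1)$ the $L_p$-spaces are nested. This avoids the apparent incompatibility of Theorem~\ref{Suf condition 1(i)}'s constraint $p \leq 2 \leq q$ with the case $p = q = \tau \neq 2$.

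I would split the argument into the two cases $\tau \geq 2$ and $\tau \leq 2$. For $\tau \geq 2$, H\"older's inequality yields the continuous embedding $L_\tau(0,1) \hookrightarrow L_2(0,1)$. Consequently, if $T_\lambda \colon L_2(0,1) \to L_\tau(0,1)$ is bounded, then for any $f \in L_\tau(0,1)$ we have
\begin{equation*}
\|T_\lambda f\|_{L_\tau(0,1)} \leq \|T_\lambda\|_{L_2 \to L_\tau}\,\|f\|_{L_2(0,1)} \leq \|T_\lambda\|_{L_2 \to L_\tau}\,\|f\|_{L_\tau(0,1)},
\end{equation*}
so $\|\lambda\|_{m_\tau^\tau} \leq \|\lambda\|_{m_2^\tau}$. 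Applying Theorem~\ref{Suf condition 1(i)} with $p=2$ and $q=\tau$ gives $1/r = 1/2 - 1/\tau = (\tau-2)/(2\tau)$, that is $r = 2\tau/(\tau-2) = 2\tau/|2-\tau|$, which produces the desired bound.

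For $\tau \leq 2$ the roles are swapped: H\"older gives $L_2(0,1) \hookrightarrow L_\tau(0,1)$, hence $\|\lambda\|_{m_\tau^\tau} \leq \|\lambda\|_{m_\tau^2}$ by the same composition argument. Now Theorem~\ref{Suf condition 1(i)} with $p=\tau$ and $q=2$ gives $1/r = 1/\tau - 1/2 = (2-\tau)/(2\tau)$, so again $r = 2\tau/|2-\tau|$. The boundary case $\tau = 2$ is degenerate with $s = \infty$ and corresponds to the trivial bound $\|\lambda\|_{m_2^2} \lesssim \|\lambda\|_{l_\infty}$, consistent with the convention $l_{\infty,\infty} = l_\infty$ recorded in the Preliminaries.

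There is no real obstacle here; the whole proof reduces to spotting that the finite measure of $[0,1]$ allows one to trade the target or source $L_2$ for $L_\tau$ at no cost, which is exactly the move that brings Theorem~\ref{Suf condition 1(i)} into play with the correct Lorentz index $s = 2\tau/|2-\tau|$. If anything, one should only be mildly careful to state both inclusions in the correct direction and to verify that the index $r$ produced by Theorem~\ref{Suf condition 1(i)} matches $s$ in each of the two regimes.
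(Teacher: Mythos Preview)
Your proof is correct and follows essentially the same approach as the paper: apply Theorem~\ref{Suf condition 1(i)} with $(p,q)=(2,\tau)$ when $\tau\geq 2$ and $(p,q)=(\tau,2)$ when $\tau\leq 2$. The paper's proof is a one-line remark that leaves the passage from $m_2^\tau$ (resp.\ $m_\tau^2$) to $m_\tau^\tau$ implicit; you have simply made explicit the embedding $L_\tau(0,1)\hookrightarrow L_2(0,1)$ (resp.\ $L_2(0,1)\hookrightarrow L_\tau(0,1)$) on the finite-measure space that justifies this step.
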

	\begin{proof}
		The statement follows from Theorem \ref{Suf condition 1(i)}, by choosing $(p,q) = (2,\tau)$ if $2\leq \tau$, and choosing $(p,q) = (\tau,2)$ if $\tau \leq 2$.
	\end{proof}
	
	Let us note that Corollary \ref{tau to tau} and Marcinkiewicz theorem are not equivalent. For the right-hand side of \eqref{tau to tau 2} to be finite it is necessary that $\lambda\in l_{\infty}$, which is not needed for Marcinkiewicz theorem. Conversely, if we choose $\lambda=\{\lambda_k\}_{k\in \mathbb{Z}}$ such that $\lambda_0 = 0$ and
	\begin{equation*}
		\lambda_{\pm k} =(-1)^{k} \frac{1}{k^{\frac{|\tau - 2|}{2\tau}}} \qquad \text{for } k\in \mathbb{N}.
	\end{equation*}
	Then the right-hand side of \eqref{tau to tau 2} is bounded by $1$, while 
	\begin{equation*}
		\sup_{n\in \mathbb{N}_0}  \sum_{k=2^n}^{2^{n+1}-1} |\lambda_k - \lambda_{k-1}| = \infty.
	\end{equation*}

Next, we show that the sufficient conditions in Theorem \ref{HT} are strictly weaker than \eqref{H_t_cond} and \eqref{H_s_cond}.

Let $\chi_{\Delta_{k}}$ be the indicator function of $\Delta_k$. Then, for the distribution functions of $\lambda\chi_{\Delta_{k}}$ and $\lambda$, it follows that $d_{\lambda\chi_{\Delta_{k}}}(\sigma) \leq d_\lambda(\sigma)$, and hence,
\begin{equation*}
	(\lambda\chi_{\Delta_{k}})^*(t)\leq \lambda^*(t), \qquad \text{for } t>0, \text{ } k\in \mathbb{Z}.
\end{equation*}
Therefore
\begin{equation*}
	\sup_{k\in \mathbb{Z}} \sup_{e \subset \Delta_k} \frac{1}{|e|^{1/r'}} \left| \int_{e} \lambda(\xi) d\xi \right|  \approx \sup_{k\in \mathbb{Z}} \|\lambda\|_{L_{r,\infty}(\Delta_{k})} \leq \|\lambda\|_{L_{r,\infty}(\mathbb{R})},
\end{equation*}
so that Theorem \ref{HT}(i) implies the H\"ormander's theorem for  Fourier transform multipliers. 

Let us consider the following example:
\begin{example}\label{exmH1}
	Let $r>0$ and $\lambda$ be an even function such that
	\begin{equation*}
		\lambda(\xi) = \frac{1}{(\xi - 2^k)^\frac{1}{r}}, \qquad \text{for }\xi\in (2^k,2^{k+1}), \text{ } k\in \mathbb{Z}.
	\end{equation*}
\end{example}
Since $d_\lambda(\sigma) = \infty$ for $0<\sigma<\infty$, we obtain that $\|\lambda\|_{L_{r,\infty}(\mathbb{R})} = \infty$, so that we can not apply H\"ormander's theorem. However, one can check that 
\begin{equation*}
	d_{\lambda\chi_{\Delta_{k}}}(\sigma) = 
	\begin{cases}
		2^k & \sigma \leq \left(\frac{1}{2^k}\right)^{1/r},\\
		\left(\frac{1}{\sigma}\right)^r & \sigma > \left(\frac{1}{2^k}\right)^{1/r},
	\end{cases}
	\qquad
	\left(\lambda\chi_{\Delta_{k}}\right)^*(t) = 
	\begin{cases}
		\left(\frac{1}{t}\right)^{1/r} & t\leq 2^k,\\
		0 & t>2^k.
	\end{cases}
\end{equation*}
Therefore
\begin{equation*}
	\|\lambda\|_{L_{r,\infty}(\Delta_{k})} = \sup_{t>0} t^{1/r} \left(\lambda\chi_{\Delta_{k}}\right)^*(t) = \sup_{0<t\leq 2^k} t^{1/r} t^{-1/r} =1,
\end{equation*}
and hence, by Theorem \ref{HT}(i), it follows that $\lambda\in M_{p}^q$.

Similarly, one can show that
\begin{equation*}
	\sup_{k\in \mathbb{N}_0} \sup_{e \subset \delta_k} \frac{1}{|e|^{1/r'}} \left|  \sum_{m\in e} \lambda_m \right| 
	\approx \sup_{k\in \mathbb{N}_0}\|\lambda\|_{l_{r,\infty}(\delta_k)} 
	\lesssim \|\lambda\|_{l_{r,\infty}},
\end{equation*}
so that Theorem \ref{HT}(ii) implies the H\"ormander theorem for  Fourier series multipliers. 

Let us consider the example:
\begin{example}\label{examH2}
	Let $r>0$ and $\lambda=\{\lambda_k\}_{k\in \mathbb{Z}}$ be the sequence such that
	\begin{equation*}
		\lambda_j
		=
		\begin{cases}
			\left(\frac{1}{j + 1 - 2^k}\right)^{\frac{1}{r}}, & \text{for } j\in \delta_{k} \text{ and } k\in \mathbb{N},\\
			0 ,& j\leq 0.
		\end{cases}
	\end{equation*}
	Since $\lambda_j^* = 1$ for $j\in \mathbb{N}$, we derive
	\begin{equation*}
		\|\lambda\|_{l_{r,\infty}(\mathbb{Z})} = \sup_{k\geq 0}k^{1/r} \lambda_k^*=\infty,
	\end{equation*}
	while
	\begin{equation*}
		\|\lambda\|_{l_{r,\infty}(\delta_k)} = 1 <\infty.
	\end{equation*}
	Therefore, we can not apply the H\"ormander's theorem, however, we can apply Theorem \ref{HT}(ii) to see that $\lambda\in m_p^q$.
\end{example}

Further, we check that the sufficient conditions in Theorem \ref{LT} are strictly weaker than \eqref{L_t_cond} and \eqref{L_s_cond}. Indeed, we estimate
\begin{multline*}
	2^{\frac{k}{r}} \int_{\Delta_{k}} |\lambda'(\xi)| d\xi = 2^{\frac{k}{r}} \int_{\Delta_{k}} |\lambda'(\xi)| |\xi|^{\frac{1}{r}+1} |\xi|^{-\frac{1}{r}-1} d\xi 
	\leq \sup_{\xi\in \mathbb{R}} |\lambda'(\xi)| |\xi|^{\frac{1}{r}+1} 2^{\frac{k}{r}} \int_{\Delta_{k}} |\xi|^{-\frac{1}{r}-1} d\xi \\
	\leq \sup_{\xi\in \mathbb{R}} |\lambda'(\xi)| |\xi|^{\frac{1}{r}+1} r  \left(1 - 2^{-\frac{1}{r}}\right).
\end{multline*}
Therefore, Theorem \ref{LT}(i) gives \eqref{L_t_cond}. 

We consider the following example:
\begin{example}\label{examL1}
	Let $0<\alpha<1$ and $\lambda$ be an even function on $\mathbb{R}$ such that
	\begin{equation*}
		\lambda(\xi) = 
		\begin{cases}
			(2 - x)^\alpha & x\leq 2,\\
			0 & x>2.
		\end{cases}
	\end{equation*}
	Note that $\lambda'(\xi) = \alpha(2 - x)^{\alpha - 1}$ on $[0,2)$, which is not bounded at $\xi = 2$. Therefore, the 
	right-hand side of \eqref{L_t_cond} is infinite.  However, since the singularity of $\lambda'$ at $\xi = 2$ is integrable, we conclude that 
	\begin{equation*}
		 \sup_{k\in \mathbb{Z}} 2^{\frac{k}{r}} \int_{\Delta_k} |\lambda'(\xi)| d\xi <\infty.
	\end{equation*}
	Moreover, $\lambda$ is absolutely continuous on $(-\infty, 0]$ and $[0,\infty)$, and $|\lambda(\xi)|\rightarrow0$ as $|\xi|\rightarrow\infty$. Therefore, by Theorem \ref{LT} (i), $\lambda\in M_p^q$ for $1<p<q<\infty$.
\end{example}

We repeat these arguments for the second part. We write
\begin{multline*}
	2^{\frac{k}{r}}\sum_{m=2^k}^{2^{k+1} - 1} \left(|\lambda_{ -m} - \lambda_{ -m +1}| + |\lambda_{ m} - \lambda_{ m -1}|\right)\\
	= 2^{\frac{k}{r}}\sum_{m=2^k}^{2^{k+1} - 1} \left(|\lambda_{ -m} - \lambda_{ -m +1}| + |\lambda_{ m} - \lambda_{ m -1}|\right)|m-1|^{1 + \frac{1}{r}} |m-1|^{-1 - \frac{1}{r}},
\end{multline*}
so that 
\begin{multline*}
	2^{\frac{k}{r}}\sum_{m=2^k}^{2^{k+1} - 1} \left(|\lambda_{ -m} - \lambda_{ -m +1}| + |\lambda_{ m} - \lambda_{ m -1}|\right)\\
	\lesssim \sup_{n\in \mathbb{Z}} |\lambda_n - \lambda_{n+1}|n^{1 + \frac{1}{r}} 2^{\frac{k}{r}}\sum_{m=2^k}^{2^{k+1} - 1}|m-1|^{-1 - \frac{1}{r}} \lesssim \sup_{n\in \mathbb{Z}} |\lambda_n - \lambda_{n+1}|n^{1 + \frac{1}{r}}.
\end{multline*}

The following example demonstrates that the converse inequality does not hold:
\begin{example}\label{examL2}
	Let $1<p<q<\infty$, $1/r = 1/p - 1/q$, and 
	\begin{equation*}
		\gamma = \sum_{j=0}^{\infty} \left(\frac{1}{2^{1/r}}\right)^{j}.
	\end{equation*}
	Then, we define recursively 
	\begin{align*}
		&\lambda_0 = \gamma,\\
		&\lambda_{2^k}=\cdots=\lambda_{2^{k+1}-1}=\lambda_{2^k-1} - \left(\frac{1}{2^{1/r}}\right)^k
	\end{align*}
	for $k\in \mathbb{N}_0$. We also set $\lambda_{-k} = \lambda_{k}$ and compute
	\begin{equation*}
		\left(2^k - 1\right)^{\frac{1}{r} + 1}\left|\lambda_{2^k-1} - \lambda_{2^k}\right| = 	\left(2^k - 1\right)^{\frac{1}{r} + 1}\left(\frac{1}{2^{1/r}}\right)^k = \left(\frac{2^k - 1}{2^k}\right)^{1/r} \left(2^k - 1\right)
	\end{equation*}
	which is unbounded as $k\rightarrow\infty$. Hence, the Lizorkin's theorem is not applicable. However, we can apply Theorem \ref{LT}. Indeed, by definition of $\gamma$ and $\lambda_j$, we obtain
	\begin{equation*}
		\lambda_{2^k}=\cdots=\lambda_{2^{k+1}-1}=\gamma - \sum_{j=0}^{k}\left(\frac{1}{2^{1/r}}\right)^j \rightarrow 0
	\end{equation*}
	as $k\rightarrow \infty$. Further, we compute
	\begin{equation*}
		2^{\frac{k}{r}}\sum_{j=2^k}^{2^{k+1}-1} \left| \lambda_j - \lambda_{j - 1}\right| = 2^{\frac{k}{r}} \left| \lambda_{2^{k}} - \lambda_{2^{k} - 1}\right| = 2^{\frac{k}{r}}\left(\frac{1}{2^{1/r}}\right)^{k} =1.
	\end{equation*}
	Therefore, by Theorem \ref{LT}, it follows that $\lambda\in m_p^q$.
\end{example}

Finally, we note that Theorem \ref{LT} is at least complementary to \cite[Theorem 2]{SarybekovaTararykovaTleukhanova} and \cite[Theorem 1.3]{PerssonSarybekovaTleukhanova}, respectively. To see this, consider the following examples.

\begin{example}\label{Laz1}
	For $0<\gamma< 1$, define a function
	\begin{equation}
		\lambda(x):=
		\begin{cases}
			2^{-\frac{k}{r}} \left( 2 - |(2^k+2) - x| \right)^\gamma & x\in [2^k, 2^k + 4] \text{ and } k\geq2,\\
			0 & \text{otherwise}.
		\end{cases}
	\end{equation}
\end{example}
First, we check that 
\begin{equation*}
	\sup_{k\in \mathbb{Z}}2^{\frac{k}{r}} \int_{\Delta_{k}} |\lambda'(x)|dx = \sup_{k\in \mathbb{Z}} 2\int_{2^k+2}^{2^k+4} (2^k+4-x)^{\gamma - 1}dx < \infty.
\end{equation*}
Therefore, by Theorem \ref{LT}, $\lambda$ represents $L_p\rightarrow L_q$ Fourier multiplier.

Further, for $\alpha<1 - 1/r$ and $\beta = \alpha + 1/r$, the inequality holds
$$|\lambda'(x)x^\beta| > g(x),$$
where
\begin{equation*}
	g(x) =
	\begin{cases}
		\gamma 2^{-\frac{k}{r}} \left( 2 - |(2^k+2) - x| \right)^{\gamma - 1}2^{k\beta},& x\in [2^k, 2^k + 4] \text{ and } k\geq2,\\
		0, & \text{otherwise}.
	\end{cases}
\end{equation*}
Note that $2^{-k/r}2^{\beta k} = 2^{\alpha k}$. One can check that $g^*=\infty$, therefore, $\lambda$ does not satisfy conditions of \cite[Theorem 2]{SarybekovaTararykovaTleukhanova}.

Consider the following example:
\begin{example}\label{Laz2}
	Let
	\begin{equation*}
		\lambda_{m}=
		\begin{cases}
			2^{-\frac{k}{r}}, & m=2^{k}+1 \text{ and } k\geq 2,\\
			0, & \text{otherwise.}
		\end{cases}
	\end{equation*}
\end{example}
Then
\begin{equation*}
	\sup_{k\in \mathbb{N}_0} 2^{\frac{k}{r}} \sum_{m=2^k}^{2^{k+1} - 1} |\lambda_m - \lambda_{m-1}|= 2<\infty.
\end{equation*}
Therefore, by Theorem \ref{LT}, $\lambda$ represents $L_p\rightarrow L_q$ Fourier multiplier. However, this can not be seen from \cite[Theorem 1.3]{PerssonSarybekovaTleukhanova}. Indeed, let $\alpha<1 - 1/r$ and $\beta = \alpha + 1/r$. Denote
\begin{equation*}
	\eta_k: = k^\beta |\lambda_k - \lambda_{k+1}|.
\end{equation*}
One can check that $\eta_{2^k} = 2^{\alpha k}$ for $k\geq 2$, therefore, $\eta_k^* = \infty$, so that $\lambda$ dose not satisfies condition of \cite[Theorem 1.3]{PerssonSarybekovaTleukhanova}.
\begin{remark}
	Note that \cite[Theorem 2]{SarybekovaTararykovaTleukhanova} and \cite[Theorem 1.3]{PerssonSarybekovaTleukhanova} are stronger than corresponding H\"ormander theorems. In particular, Examples \ref{Laz1} and \ref{Laz2} do not satisfy \eqref{H_t_cond} and \eqref{H_s_cond}, respectively.
\end{remark}

\bibliographystyle{plain}
\bibliography{references}

\setlength{\parskip}{0pt}





\end{document}